\newcommand{\beq}{\begin{equation}}
\newcommand{\eeq}{\end{equation}}
\newcommand{\bea}{\begin{eqnarray}}
\newcommand{\eea}{\end{eqnarray}}
\newcommand{\beas}{\begin{eqnarray*}}
\newcommand{\eeas}{\end{eqnarray*}}
\newtheorem{theorem}{Theorem}[section]
\newtheorem{definition}[theorem]{Definition}
\newtheorem{proposition}[theorem]{Proposition}
\newtheorem{lemma}[theorem]{Lemma}
\newtheorem{remark}[theorem]{Remark}
\newtheorem{example}[theorem]{Example}
\newtheorem{examples}[theorem]{Examples}
\newtheorem{foo}[theorem]{Remarks}
\newenvironment{proof}{\addvspace{\medskipamount}\par\noindent{\it
Proof}.}
{\unskip\nobreak\hfill$\Box$\par\addvspace{\medskipamount}}
\newcommand{\bG}{\mathbb G}
\title{Taylor expansion for the solution of a stochastic differential equation driven by fractional Brownian motions}
\author{Fabrice Baudoin\footnote{First author supported in part by
NSF Grant DMS 0907326}, Xuejing Zhang}
\date{Department of Mathematics, Purdue University \\
West Lafayette, IN, USA}
\begin{document}

\maketitle

\begin{abstract}
We study the Taylor expansion for the solution of a differential equation driven by a multi-dimensional H\"{o}lder path
with exponent $\beta>1/2$. We derive a convergence criterion that enables us to write the solution as an infinite sum of iterated integrals on a nonempty interval. We apply our deterministic results to  stochastic differential equations driven by fractional Brownian motions with Hurst parameter $H>1/2$.
We also prove that by using $L^2$ estimates of iterated integrals, the  criterion  and the speed of convergence for the stochastic Taylor expansion can be improved using Borel-Cantelli type arguments when $H \in \left( \frac{1}{2}, \frac{3}{4}  \right)$.
\end{abstract}

\baselineskip 0.25in

\tableofcontents

\newpage

\section{Introduction }
This paper is divided into two parts. In the first part, we consider a deterministic differential equation of the following type:

\begin{align}\label{de2}
\begin{cases}
   dX_{t}=\displaystyle \sum^{d}_{i=0}V_{i}\left(X_{t}\right)dy^{i}_{t} \\
   X_0=x_0
\end{cases}
 \end{align}

where the $V_i$'s are Lipschitz $C^{\infty}$ vector fields on $\mathbb{R}^n$ with Lipschitz derivatives
and where the driving signal $y:\mathbb{R}_{\ge 0} \rightarrow \mathbb{R}^{d+1}$ is $\beta$-H\"{o}lder continuous with $\beta>1/2$. Since $y:\mathbb{R}_{\ge 0} \rightarrow \mathbb{R}^{d+1}$ is $\beta$-H\"{o}lder with $\beta>1/2$, the integrals $\displaystyle \int^{t}_{0}V_{i}(X_s)dy^{i}_{t}$ are understood in Young's sense (see \cite{Young} and \cite{MZ}) and it is known that the above equation has a unique solution that can be constructed as a fixed point of the Picard iteration in a convenient Banach space of functions (see \cite{Nualart}, \cite{MZ} and \cite{Ruz}).

We  express, under suitable conditions, the solution of the differential equation as an infinite series involving the iterated integrals of the H\"{o}lder path. To study the question of the convergence  of the infinite series, which we will call the Taylor expansion associated with the differential equation later on, we will develop some convergence criteria based on an  explicit upper bound of the  iterated integrals of the H\"{o}lder path. The main tool we use to obtain such  bounds is the fractional calculus and the main assumption we need to perform the expansion is the analycity of the vector fields $V_i$'s in a neighborhood of the initial condition $x_0$. To illustrate our results, we  consider examples of differential equations on a Lie group.

The second part of this paper is concerned with a stochastic differential equation (in short SDE) of the following type:

\begin{align*}
\begin{cases}
   dX_{t}=\displaystyle \sum^{d}_{i=0}V_{i}\left(X_{t}\right)dB^{i}_{t} \\
   X_0=x_0
\end{cases}
 \end{align*}

where the $V_i$'s are Lipschitz $C^{\infty}$ vector fields on $\mathbb{R}^n$ with Lipschitz derivatives and where $(B_t)_{t \ge 0}$ is a $d$-dimensional fractional Brownian motion with Hurst parameter $H>1/2$. Throughout the paper, we use the convention that $B^0_t=t$. We will be interested in the convergence of Taylor expansion of the solution to the SDE. The deterministic estimates we obtained in the first part will automatically  lead to convergence criteria for the stochastic Taylor expansion. The rate of convergence  to $0$ and $L^p$ estimates  of the remainder term are then studied. Afterwards, inspired by some ideas of  Ben Arous \cite{G.Ben} (see also Castell \cite{Cast}), we will also use probabilistic methods to obtain a convergence criterion, which improves the convergence radius of the stochastic Taylor expansion in some cases. The key point  to our probabilistic methods are the   $L_{2}$ estimates of the iterated integrals of the fractional Brownian motion that were obtained by Baudoin-Coutin in  \cite{FB1} and Neuenkirch-Nourdin-R\"{o}{\ss}ler-Tindel in \cite{Tindel}.

\section{Taylor expansion for differential equations driven by H\"{o}lder paths}
In this section, we present our main theorem. We derive a convergence criterion, under which, we can, on a nonempty time interval,  write the solution of  our differential equation (\ref{de2}) as an infinite series of iterated integrals of  the H\"{o}lder path $y$. The materials in this section will be organized in the following way:

\begin{enumerate}
\item In Section 2.1, we will recall some basic facts about fractional calculus that will be needed in our analysis.
\item In Section  2.2, we will briefly discuss the existence and uniqueness of the solution to the differential equation (\ref{de2}). The main result is borrowed from \cite{Nualart}.  We will also define, in apparently two different ways, the Taylor expansion of the solution of the differential equation (\ref{de2}). The two apparently different expansions are then shown to eventually  be the same.
\item In Section 2.3, we will state our main theorem.
\item In Section 2.4 we will apply our results to the case of differential equations on Lie groups.
\end{enumerate}

\

\subsection{Preliminaries : Fractional calculus}
Let us first  do some remainders about fractional calculus. For further details, we refer the reader to (\cite{MZ}) or (\cite{Nualart}). Let $f\in L^{1}(a,b)$ and $\alpha>0$. The left-sided and right-sided fractional integrals of $f$ of order $\alpha$ are defined by:

\[
I^{\alpha}_{a+}f(x)=\frac{1}{\Gamma(\alpha)}\int^{x}_{a}(x-y)^{\alpha-1}f(y)dy
\]
and
\[
I^{\alpha}_{b-}f(x)=\frac{(-1)^{-\alpha}}{\Gamma(\alpha)}\int^{b}_{x}(y-x)^{\alpha-1}f(y)dy
\]

respectively, where $(-1)^{-\alpha}=e^{-i\pi\alpha}$ and $\Gamma(\alpha)=\int^{\infty}_{0}u^{\alpha-1}e^{-u}du$ is the Gamma function. Let us denote by $I^{\alpha}_{a+}(L^{p})$ (respectively $I^{\alpha}_{b-}(L^{p})$) the image of $L^{p}(a,b)$ by the operator $I^{\alpha}_{a+}$ (respectively $I^{\alpha}_{a+}$). If $f\in I^{\alpha}_{a+}(L^{p})$ (respectively  $f\in I^{\alpha}_{b-}(L^{p})$) and $0<\alpha<1$, we define for $x\in(a,b)$ the left and right  Weyl derivatives by:

\[
D^{\alpha}_{a+}f(x)=\frac{1}{\Gamma(1-\alpha)}\left(\frac{f(x)}{(x-a)^{\alpha}}+\alpha\int^{x}_{a}\frac{f(x)-f(y)}{(x-y)^{\alpha+1}}dy\right)\mathbf{1}_{(a,b)}(x)
\]

and respectively,
\[
D^{\alpha}_{b-}f(x)=\frac{(-1)^{\alpha}}{\Gamma(1-\alpha)}\left(\frac{f(x)}{(b-x)^{\alpha}}+\alpha\int^{b}_{x}\frac{f(x)-f(y)}{(y-x)^{\alpha+1}}dy\right)\mathbf{1}_{(a,b)}(x)
\]

Now recall that from (\cite{Nualart}), for a parameter $0<\alpha<1/2$, $W^{1-\alpha, \infty}_{T}(0,T)$ is defined as the space of measurable function $g: [0,T]\rightarrow \mathbb{R}$ such that:

\[
\|g\|_{1-\alpha,\infty,T}= \sup_{0<s<t<T}\left(\frac{|g(t)-g(s)|}{(t-s)^{1-\alpha}}+\int^{t}_{s}\frac{|g(y)-g(s)|}{(y-s)^{2-\alpha}}dy\right)<\infty
\]

Clearly,  for every  $ \epsilon>0 $,
\[
C^{1-\alpha+\epsilon}(0,T)\subset W^{1-\alpha, \infty}_{T}(0,T)\subset C^{1-\alpha}(0,T)
\]

Moreover, if $g\in W^{1-\alpha, \infty}_{T}(0,T)$, its restriction to $(0,t)$ belongs to $I^{1-\alpha}_{t-}(L^{\infty}(0,t))$ for every $t$ and

\begin{align*}
\Lambda_\alpha (g):=&\frac{1}{\Gamma(1-\alpha) } \sup_{0 <s<t<T} | (D^{1-\alpha}_{t-}g_{t-})(s) |.\\
               \leq& \frac{1}{\Gamma(1-\alpha)\Gamma(\alpha)}\|g\|_{1-\alpha,\infty,T}<\infty
\end{align*}

We also denote by $W^{\alpha,1}_{0}(0,T)$ the space of measurable functions f on $[0,T]$ such that:

\[
\|f\|_{\alpha,1}=\int^{T}_{0}\frac{f(s)}{s^{\alpha}}ds+\int^{T}_{0}\int^{s}_{0}\frac{|f(s)-f(y)|}{(s-y)^{\alpha+1}}dyds<\infty
\]

The restriction of $f\in W^{\alpha,1}_{0}(0,T)$ to $(0,t)$ belongs to $I^{\alpha}_{0+}(L^{1}(0,t))$ for all t.\\

\subsection{Taylor expansion of the solution}.

For the convenience of the reader, we first recall the main result concerning the existence and uniqueness results for solutions of the differential equation driven by H\"{o}lder path with H\"{o}lder exponent $\beta>1/2$. Several authors made contributions to this existence and uniqueness result. It has been first proved by M.Z\"{a}hle in \cite{MZ}. D. Nualart and A.Rascanu obtained the same result independently in \cite{Nualart}. Also, a different approach was introduced by A. A. Ruzimaikina to prove this existence and uniqueness theorem in \cite{Ruz}. The following result is due to Nualart and Rascanu (\cite{Nualart})

\begin{theorem}
Let $0<\alpha<1/2$ be fixed. Let $g\in W^{1-\alpha,\infty}(0,T;\mathbb{R}^{d})$. Consider the deterministic differential equation on $\mathbb{R}^{n}$:

\begin{align}\label{de1}
x^{i}_{t}=x^{i}_{0}+\int^{t}_{0}b^{i}(s,x_s)ds+\sum^{d}_{j=1}\int^{t}_{0}\sigma^{i,j}(s,x_s)dg^{j}_{s}, \quad  t\in[0,T]
\end{align}

$i=1,\cdots,n$ where $x_0\in \mathbb{R}^{n}$, and the coefficients $\sigma^{i,j}, b^{i}: [0,T]\times \mathbb{R}^{n}\rightarrow \mathbb{R}$ are measurable functions satisfying the following assumptions with $\rho=1/\alpha$, $0<\eta,\delta\leq 1$ and
\[
0<\alpha<\alpha_0=\min\left\{\frac{1}{2},\eta,\frac{\delta}{1+\delta}\right\}
\]

\begin{enumerate}
\item $\sigma(t,x)=(\sigma^{i,j}(t,x))_{n\times d}$ is differentiable in x, and there exist some constants $0<\eta,\delta \leq 1$ and for every $N\geq 0$ there exists $M_{N}>0$ such that the following properties hold:
 \begin{align}
 \begin{cases}
 \|\sigma(t,x)-\sigma(t,y)\|\leq M_{0}\|x-y\|, \quad x\in \mathbb{R}^{n}, \forall t\in [0,T]\\
 \\
 \|\partial_{x_i}\sigma(t,x)-\partial_{y_i}\sigma(t,y)\| \leq M_{N}\|x-y\|^{\delta} \quad \|x\|, \|y\| \leq N, \forall t\in [0,T]\\
 \\
 \|\sigma(t,x)-\sigma(t,y)\|+ \|\partial_{x_i}\sigma(t,x)-\partial_{y_i}\sigma(t,y)\| \leq M_{0}\|t-s\|^{\eta}, \forall t,s\in[0,T]
 \end{cases}
 \end{align}
\item There exists $b_0\in L^{\rho}(0,T;\mathbb{R}^{n})$, where $\rho\geq 2$, and for every $N\geq 0$ there exists $L_{N}>0$ such that the following properties hold:
 \begin{align}
 \begin{cases}
 \|b(t,x)-b(t,y)\|\leq L_{N}\|x-y\|, \quad \forall \|x\|, \|y\|\leq N, \forall t\in[0,T]\\
 \\
 \|b(t,x)\|\leq L_{0}\|x\|+b_{0}(t), \quad \forall x\in \mathbb{R}^{d}, \forall t\in[0,T]
 \end{cases}
\end{align}
\end{enumerate}

Then the differential equation (\ref{de1}) has a unique solution $x\in W^{\alpha,\infty}_{0}(0,T;\mathbb{R}^{n})$. Moreover, the solution $x$ is $(1-\alpha)$-H\"{o}lder continuous.

\end{theorem}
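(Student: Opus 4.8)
The plan is to follow the now-classical Picard iteration argument, carried out in the Banach space $W^{\alpha,\infty}_0(0,T;\mathbb R^n)$ of $(1-\alpha)$-Hölder-like functions, with the key technical input being the fractional-integration-by-parts formula for the Young integral. First I would recall (or prove) the fundamental estimate on the Young integral: for $g\in W^{1-\alpha,\infty}_T(0,T)$ and a suitable integrand $h$, one has a pointwise bound of the form
\[
\left|\int_s^t h_u\,dg_u\right|\le \Lambda_\alpha(g)\int_s^t\left(\frac{|h_r|}{(r-s)^\alpha}+\int_s^r\frac{|h_r-h_q|}{(r-q)^{\alpha+1}}\,dq\right)dr,
\]
obtained by writing the integral via the Weyl derivatives $D^\alpha_{s+}h$ and $D^{1-\alpha}_{t-}g_{t-}$ and using the duality $\int_s^t D^\alpha_{s+}h\cdot D^{1-\alpha}_{t-}g_{t-} = \int_s^t h\,dg$. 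This is exactly where the function spaces $W^{1-\alpha,\infty}_T$ and $W^{\alpha,1}_0$ introduced in Section~2.1 enter, and it reduces all subsequent work to Lebesgue-integral estimates.

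Next I would set up the Picard map $\mathcal T$ on $W^{\alpha,\infty}_0(0,T;\mathbb R^n)$ by $(\mathcal T x)^i_t = x^i_0+\int_0^t b^i(s,x_s)\,ds+\sum_j\int_0^t\sigma^{i,j}(s,x_s)\,dg^j_s$, and verify that $\mathcal T$ maps a suitable ball into itself and is a contraction for $T$ small. The Lipschitz and Hölder hypotheses on $\sigma$ and $b$ — in particular the bound on $\partial_x\sigma$ and the linear growth of $b$ through $b_0\in L^\rho$ — are precisely tuned so that, after inserting them into the Young-integral estimate above, the $W^{\alpha,\infty}_0$-norm of $\mathcal T x$ is controlled by that of $x$ with a coefficient that tends to $0$ as $T\to 0$; the constraint $\alpha<\alpha_0=\min\{1/2,\eta,\delta/(1+\delta)\}$ and $\rho=1/\alpha\ge 2$ is what makes the relevant singular integrals $\int_0^t(t-s)^{-\alpha}\cdots$ convergent. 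Then a standard fixed-point/contraction argument on $[0,T_0]$ for small $T_0$ gives a unique local solution, and since the a priori estimates are linear in the solution (no blow-up), one patches the local solutions together over $[0,T]$ to get a global unique solution.

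Finally, the $(1-\alpha)$-Hölder continuity of the solution is read off from the same integral estimate applied to $x_t-x_s$: once $x\in W^{\alpha,\infty}_0$ is known, plugging it back into the two integral terms and using the growth bounds yields $|x_t-x_s|\le C(t-s)^{1-\alpha}$ uniformly.

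\medskip

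The main obstacle I expect is the self-mapping/contraction step: one must choose the right ball (a ball in $W^{\alpha,\infty}_0$, not merely in a Hölder norm) and carefully track how the constants $M_N,L_N$ depending on the radius $N$ interact with the smallness in $T$, so that the contraction is genuine rather than circular. The singular-integral bookkeeping — controlling terms like $\int_0^t\int_0^s (s-q)^{-\alpha-1}|\sigma(s,x_s)-\sigma(q,x_q)|\,dq\,ds$ by splitting the difference of $\sigma$ into a spatial part (handled by $M_0$ and the Hölder seminorm of $x$) and a time part (handled by the $\eta$-Hölder-in-$t$ hypothesis) — is routine but delicate, and getting the exponent arithmetic to close is exactly what forces the definition of $\alpha_0$. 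Since this is a verbatim restatement of a published theorem, I would in fact simply cite \cite{Nualart} for the full details and only indicate this outline.
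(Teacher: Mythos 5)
The paper offers no proof of this statement: it is a verbatim quotation of the Nualart--Rascanu existence and uniqueness theorem and is simply cited as \cite{Nualart}, so there is no internal argument to compare against. Your outline is a faithful summary of the strategy of the cited proof --- the fractional-integration-by-parts bound on the Young integral via the Weyl derivatives, a fixed-point argument in $W^{\alpha,\infty}_0(0,T;\mathbb{R}^n)$ whose exponent bookkeeping is exactly what forces $\alpha<\alpha_0$, and the $(1-\alpha)$-H\"older regularity read off a posteriori; the only real difference is cosmetic, namely that Nualart and Rascanu obtain the contraction on all of $[0,T]$ at once by working with an equivalent exponentially weighted norm $\sup_{t}e^{-\lambda t}\left(|f(t)|+\int_0^t\frac{|f(t)-f(s)|}{(t-s)^{\alpha+1}}\,ds\right)$ for $\lambda$ large, rather than by small-time contraction followed by patching.
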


With this existence and uniqueness result in hands, we now come back to the differential equation we are concerned with:

\begin{align}\label{dep3}
\begin{cases}
   dX_{t}=\displaystyle \sum^{d}_{i=0}V_{i}\left(X_{t}\right)dy^{i}_{t} \\
   X_0=x_0
\end{cases}
 \end{align}

Throughout the section, we make the following assumptions:
\begin{enumerate}
\item
The vector fields $V_{i}'s$ are Lipschitz, $C^{\infty}$, with Lipschitz derivatives, and analytic on the set $\{x:  \|x-x_0\| \leq C\}$ for some $C>0$
\item  The driving path $y:\mathbb{R}_{\ge 0} \rightarrow \mathbb{R}^{d+1}$ is $\beta$-H\"{o}lder continuous with $\beta>1/2$.
\end{enumerate}
Clearly, from by Theorem 1, under our assumptions, this differential equation  admits a unique solution $(X_t)_{t \ge 0}$.

Let $f: \mathbb{R}^{n} \rightarrow \mathbb{R}$ be a $C^{\infty}$ function. By the change of variable formula, we have
\[
f(X_t)=f(x_0)+\sum^{d}_{i=0}\int^{t}_{0}V_{i}f(X_s)dy^{i}_{s}.
\]

Now, a new application of the  change of variable formula to $V_{i}f(X_s)$ leads to
\[
f(X_t)=f(x_0)+\sum^{d}_{i=0}V_{i}f(x_0)\int^{t}_{0}dy^{i}_{s}+\sum^{d}_{i,j=0}\int^{t}_{0}\int^{s}_{0}
V_{j}V_{i}f(X_u)dy^{j}_{u}dy^{i}_{s}.
\]

We can continue this procedure to get after $N$ steps
\[
f(X_t)=f(x_0)+\sum^{N}_{k=1}\sum_{I=(i_1,\cdots,i_k)}(V_{i_1}\cdots V_{i_k}f)(x_0)\int_{\triangle^{k}[0,t]}dy^{I}+R_{N}(t)
\]
for some remainder term $R_{N}(t)$, where we used the notations:
\begin{enumerate}
\item $\triangle^{k}[0,t]=\{(t_1,\cdots,t_k)\in[0,t]^{k}, 0\leq t_1\leq t_2\cdots\leq t_k\leq t\}$
\item If $I=\left(i_1,\cdots,i_k\right)\in\{0,1,\cdots,d\}^k$ is a word with length $k$, \[\int_{\triangle^{k}[0,t]}dy^{I}=\displaystyle
    \int_{0 \le t_1 \le t_2 \le \cdots \le t_k \le t}dy^{i_1}_{t_1}\cdots dy^{i_k}_{t_k}.
    \]

\end{enumerate}

If we let $N\rightarrow +\infty$, we are led to the formal expansion formula:

\[
f(X_t)=f(x_0)+\sum^{\infty}_{k=1}\sum_{I=(i_1,\cdots,i_k)}(V_{i_1}\cdots V_{i_k}f)(x_0)\int_{\triangle^{k}[0,t]}dy^{I}
\]

Now let us denote $\pi^{j}(x)=x_j$, the $j$-th projection map. By using the previous expansion with $f=\pi^{j}$ we get
\[
X^{j}_{t}=x^{j}_{0}+\sum^{\infty}_{k=1}\sum_{I=(i_1,\cdots,i_k)}(V_{i_1}\cdots V_{i_k}\pi^{j})(x_0)\int_{\triangle^{k}[0,t]}dy^{I}
\]

Therefore we have formally,
\[
X_t=x_{0}+\sum^{+\infty}_{k=1}g_{k}(t)
\]

where
\[
g^{j}_{k}(t)=\sum_{|I|=k}(V_{i_1}\cdots V_{i_k}\pi^{j})(x_0)\int_{\triangle^{k}[0,t]}dy^{I}.
\]
This leads to the following definition:

\begin{definition}
The Taylor expansion associated with the differential equation (\ref{dep3}) is defined as
\[
x_0+\sum^{\infty}_{k=1}g_{k}(t)
\]
where
\[
g^{j}_{k}(t)=\sum_{|I|=k}P^{j}_{I}\int_{\triangle^{k}[0,t]}dy^{I}, \quad P^{j}_{I}=(V_{i_1}\cdots V_{i_k}\pi^{j})(x_0).
\]
\end{definition}

Of course, at that point, the Taylor expansion is only a formal object in the sense that the convergence questions are not addressed yet. Our goal will be to provide natural assumptions ensuring the convergence of the expansion. Let us observe that the expansion can be written in a more compact way by using labeled rooted trees. For details on the combinatorics associated to these trees and the compact form of the expansion, we refer the interested reader to \cite{Tindel}.

Another form of the expansion, which will be convenient for us,  relies on  the Taylor expansion of the vector fields and can be obtained as described below. 
We first need to  introduce the following notations. 
Let $\displaystyle \sum_{\alpha\in N^{n}}b^{\alpha}_{i}\left(x-x_0\right)^{\alpha}$ be the formal Taylor series of $V_{i}$ at $x_0$ with the usual convention that $x^{\alpha}=x^{\alpha_1}_{1}\cdots x^{\alpha_n}_{n}$ if $\alpha=\left(\alpha_1,\cdots,\alpha_n\right)$ and $x=\left(x_1,\cdots,x_n\right)$. Let now  $C^{m}_{i}\left(h_1,\cdots,h_k\right)$ be the coefficient of $y^{m}$ in   $\displaystyle \sum_{\alpha\in N^{n}}b^{\alpha}_{i}\left(h_{1}y+\cdots+h_{k}y^{k}\right)^{\alpha}$. 

 Consider the following rescaled differential equation, which depends on the parameter $\epsilon$:

\begin{align}\label{pde}
\begin{cases}
   dX^{\epsilon}_{t}=\displaystyle \sum^{d}_{i=0}\epsilon V_{i}(X^{\epsilon}_{t})dy^{i}_{t}\\
   X^{\epsilon}_0=x_0.
\end{cases}
 \end{align}

Assume now that $X_t^\epsilon$ admits an expansion in powers of $\epsilon$ as  $x_0+\displaystyle \sum^{\infty}_{k=1}h_{k}(t)\epsilon^{k}$ for some $h_k$'s. Heuristically, because of the analyticity of the $V_{i}'s$, we can expand $V_{i}\left(x_0+\displaystyle \sum^{\infty}_{k=1}h_{k}(t)\epsilon^{k}\right)$ at $x_0$. Therefore, on the right hand side of (\ref{pde}), we have:

\begin{align*}
\sum^{d}_{i=0}\epsilon V_{i}(X^{\epsilon}_{t})dy^{i}_{t}
&=\sum^{d}_{i=0}\epsilon\left(V_{i}(x_0)+C^{1}_{i}(h_1)\epsilon+C^{2}_{i}\left(h_1,h_2\right)\epsilon^{2}+\cdots \right)dy^{i}_{t}\\
& =\left(\left(\sum^{d}_{i=0}V_{i}(x_0)dy^{i}_{t}\right)\epsilon+\left(\sum^{d}_{i=0}C^{1}_{i}(h_1)dy^{i}_{t}\right)\epsilon^{2}+\cdots\right)
\end{align*}

Therefore, by identifying this expression with the left side of the equation we are let with the conclusion that  the $h_{k}(t)'s$ need to satisfy the following inductive system of equations:
\begin{align}\label{iteration}
\begin{cases}
  dh_1(t)=\displaystyle \sum^{d}_{i=0}V_{i}(x_0) dy^{i}_{t} \\
  dh_2(t)=\displaystyle \sum^{d}_{i=0}C^{1}_{i}\left(h_{1}(t)\right)dy^{i}_{t}\\
  .\\
  .\\
  .\\
  dh_{k}(t)=\displaystyle \sum^{d}_{i=0}C^{k-1}_{i}\left(h_{1}(t),\cdots,h_{k-1}(t)\right)dy^{i}_{t}
\end{cases}
 \end{align}
 If we let $\epsilon=1$, we therefore see that $x_0+\displaystyle \sum^{\infty}_{k=1}h_{k}(t)$ formally solves the equation (\ref{dep3}) provided that the $h_k$'s satisfy the above inductive system.

We have the following result:
\begin{proposition}
Define $h_k$ inductively by the system (\ref{iteration}), then for every $k \in \mathbb{N}$ and $t \ge 0$ we have $h_{k}(t)=g_{k}(t)$.
\end{proposition}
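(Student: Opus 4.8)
The plan is to prove the identity $h_k(t) = g_k(t)$ by induction on $k$, exploiting the fact that both families of functions arise from the same underlying differential equation (\ref{dep3}) seen through two different bookkeeping devices. First I would record explicitly the recursive structure of $g_k$: from the definition $g^j_k(t) = \sum_{|I|=k} P^j_I \int_{\triangle^k[0,t]} dy^I$ and the nesting $\int_{\triangle^k[0,t]} dy^I = \int_0^t \left(\int_{\triangle^{k-1}[0,s]} dy^{I'}\right) dy^{i_k}_s$ where $I = (I', i_k)$, one gets $dg_k(t) = \sum_{i=0}^d \left(\sum_{|J|=k-1}(V_{i_1}\cdots V_{i_{k-1}} V_i \pi)(x_0)\int_{\triangle^{k-1}[0,t]}dy^J\right) dy^i_t$, i.e. $g_k$ satisfies a recursion driven by the same words with one letter appended. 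The point of the proof is that this recursion, once the operators $V_{i_1}\cdots V_{i_{k-1}}$ are re-expressed in terms of derivatives of the Taylor coefficients of the $V_i$'s, is exactly the system (\ref{iteration}).

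The key algebraic step is the following. For a smooth $F$, $(V_{i_1}\cdots V_{i_{k-1}} F)(x_0)$ can be rewritten using the chain rule: differentiating $F(x_0 + h_1 y + \cdots)$ and collecting the coefficient of the appropriate power of $y$ produces exactly the combination appearing in $C^{k-1}_i$. More precisely, I would prove the auxiliary claim that, with $h_1,\dots,h_{k-1}$ defined by (\ref{iteration}) and already known (by the inductive hypothesis) to equal $g_1,\dots,g_{k-1}$,
\[
\sum_{|J|=k-1}(V_{j_1}\cdots V_{j_{k-1}} V_i \pi)(x_0)\int_{\triangle^{k-1}[0,t]}dy^J = C^{k-1}_i\big(h_1(t),\dots,h_{k-1}(t)\big).
\]
This is where the analyticity assumption and the definition of $C^m_i$ as the coefficient of $y^m$ in $\sum_\alpha b^\alpha_i (h_1 y + \cdots + h_k y^k)^\alpha$ enter: the left-hand side is the degree-$(k-1)$ part (in the grading where $\int_{\triangle^m}dy^{(\cdot)}$ carries weight $m$) of the formal composition of the Taylor series of $V_i$ with the formal series $\sum_m g_m(t) = X_t - x_0$, and iterated-integral identities (shuffle relations) guarantee that this composition is computed term-by-term exactly as the substitution $x - x_0 \mapsto \sum_m h_m(t)$. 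Granting the claim, the recursion for $g_k$ becomes $dg_k(t) = \sum_{i=0}^d C^{k-1}_i(h_1(t),\dots,h_{k-1}(t))\,dy^i_t$, which together with $g_k(0) = 0 = h_k(0)$ and the fact that Young integration against a fixed $\beta$-Hölder path determines the path from its differential, gives $g_k = h_k$. The base case $k=1$ is immediate: $g_1^j(t) = \sum_i V_i\pi^j(x_0)\int_0^t dy^i$, which is precisely $h_1$.

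The main obstacle I anticipate is the bookkeeping in the algebraic claim above, namely matching the operator-theoretic expression $\sum_{|J|=k-1}(V_{j_1}\cdots V_{j_{k-1}}V_i\pi)(x_0)\int_{\triangle^{k-1}[0,t]}dy^J$ with the combinatorial coefficient $C^{k-1}_i$. The subtlety is that $V_{j_1}\cdots V_{j_{k-1}}$ is a composition of first-order differential operators, so applying it to $V_i$ generates, via the product/chain rules, a sum over all ways of distributing the $k-1$ differentiations among the nested structure — and this has to be reconciled with the way powers of $y$ are collected inside $(h_1 y + \cdots)^\alpha$. The cleanest route is probably not to expand everything by brute force but to argue structurally: both sides, read as formal power series in $\epsilon$ after the rescaling (\ref{pde}), solve the \emph{same} formal fixed-point equation $X^\epsilon = x_0 + \epsilon\sum_i \int_0^\cdot V_i(X^\epsilon)\,dy^i$, and uniqueness of the coefficients of a formal power series solving such an equation — together with the already-established convergence of the Picard iteration from Theorem 1 — forces the two expansions to coincide coefficient by coefficient. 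In other words, I would show that $x_0 + \sum_k g_k(t)\epsilon^k$ and $x_0 + \sum_k h_k(t)\epsilon^k$ are both the unique formal solution, hence equal, hence $g_k = h_k$ for all $k$. Either way, the analytic input needed is minimal: only that the formal manipulations are legitimate at the level of formal series, which the analyticity of the $V_i$'s near $x_0$ guarantees.
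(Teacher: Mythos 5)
Your plan correctly isolates the crux of the proposition, but neither of your two routes closes it. In the inductive route everything hinges on the identity
\[
\sum_{|J|=k-1}(V_{j_1}\cdots V_{j_{k-1}} V_i \pi)(x_0)\int_{\triangle^{k-1}[0,t]}dy^J \;=\; C^{k-1}_i\bigl(g_1(t),\dots,g_{k-1}(t)\bigr),
\]
which is exactly the nontrivial combinatorial content of the statement: a Fa\`a di Bruno--type matching between the Leibniz expansion of the composed first-order operators $V_{j_1}\cdots V_{j_{k-1}}$ and the shuffle expansion of the powers $\bigl(\sum_m g_m(t)\bigr)^{\alpha}$ appearing in the definition of $C^{k-1}_i$. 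You name shuffle relations as the tool but do not carry out the computation; you explicitly defer it as ``the main obstacle.'' Your fallback, the ``structural'' route via uniqueness of formal power series solutions, does not avoid this obstacle. For the $h$-series, being a formal solution of the fixed-point equation is true by construction (that is how the system (\ref{iteration}) is derived). For the $g$-series, however, the assertion that $x_0+\sum_k g_k(t)\epsilon^k$ formally solves $X^\epsilon=x_0+\epsilon\sum_i\int_0^\cdot V_i(X^\epsilon)\,dy^i$ is, after expanding $V_i$ at $x_0$ and equating powers of $\epsilon$, precisely the displayed identity again. The convergence of the Picard iteration for a fixed $\epsilon$ (Theorem 1) says nothing about the expansion in $\epsilon$, so the uniqueness argument is circular as stated.

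The ingredient that lets the paper bypass the combinatorics entirely is the genuine smoothness of $\epsilon\mapsto X_t^\epsilon$, obtained by augmenting the state to $Y_t=(\epsilon,X_t^\epsilon)$ and invoking smoothness of the flow with respect to its initial condition. With that in hand both families are identified with the same object: $h_k(t)=\frac{1}{k!}\frac{d^kX_t^\epsilon}{d\epsilon^k}(0)$ follows by induction from (\ref{iteration}), while $g_k(t)=\frac{1}{k!}\frac{d^kX_t^\epsilon}{d\epsilon^k}(0)$ follows by differentiating at $\epsilon=0$ the finite change-of-variable expansion $X_t^\epsilon=x_0+\sum_{k=1}^N g_k(t)\epsilon^k+\epsilon^{N+1}R_N^\epsilon(t)$. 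To repair your argument you must either prove the combinatorial identity honestly (a shuffle/Fa\`a di Bruno computation, which is feasible but is the whole work) or import the differentiability in $\epsilon$ as the paper does; as written, the proposal has a gap at its central step.
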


\begin{proof}

First, let us observe that for every $t \ge 0$, $ \epsilon \to X^\epsilon_t$ is a smooth function, indeed the equation 
\[
\begin{cases}
   dX^{\epsilon}_{t}=\displaystyle \sum^{d}_{i=0}\epsilon V_{i}(X^{\epsilon}_{t})dy^{i}_{t}\\
   X^{\epsilon}_0=x_0.
\end{cases}
\]
can rewritten as
\[
\begin{cases}
   dY_{t}=\displaystyle \sum^{d}_{i=0} \tilde{V}_{i}(Y_{t})dy^{i}_{t}\\
   Y_0=(\epsilon, x_0),
\end{cases}
\]
here $Y_t=(\epsilon, X_t^\epsilon)$ is now valued in $\mathbb{R} \times \mathbb{R}^n$ and $\tilde{V_i}(\epsilon,x)=(0,\epsilon V_i(x))$.
The problem of the smoothness of $X_t^\epsilon$ with respect to $\epsilon$ is thus reduced to the problem of the smoothness of the solution of an equation driven by H\"older paths with respect to its initial condition. Theorems that apply in  this situation may be found in \cite{DN} and \cite{NS}.

From the definition of $h_{k}(t)$ which is given by the system (\ref{iteration}), it is then clear by induction that 
\[
h_k(t)=\frac{1}{k!} \frac{d^k X_t^{\epsilon}}{d\epsilon^k} (0).
\]

On the other hand, as we have seen before,  the repeated application of the change of  variable formula to the smooth function $f(x)=\pi^{j}(x)$, gives
\[
X^{\epsilon,j}_{t}=x^{j}_{0}+\sum^{N}_{k=1}\epsilon^{k}\sum_{I=(i_1,\cdots,i_k)}(V_{i_1}\cdots V_{i_k}\pi^{j})(x_0)\int_{\triangle^{k}[0,t]}dy^{I}+\epsilon^{N+1}R^{\epsilon,j}_{N}(t),
\]
where
\[
R^{\epsilon,j}_{N}(t)=\sum_{I=(i_1,\cdots, i_{N+1})} \int_{\Delta^{N+1}[0,t]} (V_{i_1} \cdots V_{i_{N+1}} \pi^j) (X_{s_1}^\epsilon) dy^{i_1}_{s_1}\cdots dy^{i_{N+1}}_{s_{N+1}}.
\]
Therefore we have
\[
X^{\epsilon}_{t}=x_0+\sum^{N}_{k=1}g_{k}(t)\epsilon^{k}+\epsilon^{N+1}R^\epsilon_{N}(t).
\]
Computing $\frac{d^k X_t^{\epsilon}}{d\epsilon^k} (0)$ by using the previous expression immediately gives $h_{k}(t)=g_{k}(t)$.
\end{proof}

\subsection{Convergence of the Taylor expansion}

We now address the convergence questions related to the Taylor expansion. We use the same assumptions and the same notations as in the previous section. In particular we remind that the vector fields $V_{i}'s$ are Lipschitz, $C^{\infty}$ with Lipschitz derivatives, and analytic on the set $\{x:  \|x-x_0\| \leq C\}$ for some $C>0$.

\subsubsection{A general result}

We first have the following general result. 

\begin{theorem}\label{general}
Let $x_0+\sum^{\infty}_{k=1}g_{k}(t)$ be the Taylor expansion associated with the equation (\ref{dep3}). There exists $T>0$ such that for $0\le t <T$ the series
\[
 \sum^{\infty}_{k=1}\| g_{k}(t)\|
\]
is convergent and 
\[
X_t=x_0+ \sum^{\infty}_{k=1}g_{k}(t).
\]
\end{theorem}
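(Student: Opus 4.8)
The plan is to establish an explicit upper bound on $\|g_k(t)\|$ that decays fast enough to make $\sum_k \|g_k(t)\|$ convergent on a suitably small interval $[0,T)$, and then to identify the sum with $X_t$ via the remainder term $R_N(t)$. The argument has two independent halves: (i) bounding the iterated integrals $\int_{\triangle^k[0,t]} dy^I$ using fractional calculus, and (ii) controlling the growth of the coefficients $P^j_I = (V_{i_1}\cdots V_{i_k}\pi^j)(x_0)$ using analyticity of the $V_i$'s.

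For (i), I would use the fact that $y$ is $\beta$-H\"older with $\beta>1/2$, so that for $\alpha$ with $1-\beta < \alpha < 1/2$ each component $y^i$ lies in $W^{1-\alpha,\infty}_T(0,T)$. The standard fractional-calculus estimate (integration by parts for Young integrals, as in \cite{Nualart} or \cite{MZ}) gives a bound of the form $\left| \int_0^t Z_s \, dy^i_s \right| \le \Lambda_\alpha(y^i) \int_0^t \left( |Z_s| s^{-\alpha} + \int_0^s \frac{|Z_s - Z_r|}{(s-r)^{\alpha+1}} dr \right) ds$ for a sufficiently regular integrand $Z$. Applying this inductively to the nested integrals $\int_{\triangle^k[0,t]} dy^I$ — or, more cleanly, using the known factorial-decay estimate for iterated Young integrals — yields a bound of the type $\left| \int_{\triangle^k[0,t]} dy^I \right| \le \frac{(K t^\beta)^k}{\Gamma(k\beta + 1)}$ (or $(k!)^{-\beta}$ up to constants), uniformly over the $(d+1)^k$ words $I$ of length $k$, where $K$ depends only on $\beta$ and $\|y\|_{\beta\text{-H\"older}}$ on $[0,T]$.

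For (ii), analyticity of each $V_i$ on $\{\|x-x_0\|\le C\}$ gives, via Cauchy estimates, a bound $|b^\alpha_i| \le M \rho^{-|\alpha|}$ for the Taylor coefficients, for some $M>0$ and $\rho>0$. This translates (by the standard estimate for derivatives of compositions / the fact that $V_{i_1}\cdots V_{i_k}\pi^j$ is a differential operator of order $\le k$ built from these coefficients) into $|P^j_I| \le M_1^k \, k!$ for some constant $M_1$ depending on $M,\rho,n$. Hence $\|g_k(t)\| \le (d+1)^k \cdot M_1^k k! \cdot \frac{(Kt^\beta)^k}{\Gamma(k\beta+1)}$. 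Since $\beta > 1/2$, Stirling gives $k!/\Gamma(k\beta+1) \sim C^k k!^{1-\beta} \Gamma$-type growth — \emph{this is precisely where $\beta > 1/2$ is essential but also where the result is delicate}: $k! \big/ \Gamma(k\beta+1)$ still grows like $(k!)^{1-\beta}$ which is super-exponential, so the series does \emph{not} converge for all $t$ from this crude bound. The resolution is the $t^{\beta k}$ factor: for $t$ small the term is $\le (A t^\beta)^k (k!)^{1-\beta}$, and one must instead group the expansion differently — using the inductive system (\ref{iteration}) directly rather than word-by-word bounds — so that the analyticity radius $C$ enters and the $h_k = g_k$ (Proposition) stay inside the ball $\{\|x-x_0\|\le C\}$ for $t<T$. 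Concretely I would set up a majorant series: let $u(t) = \sum_k a_k(t)$ with $a_k$ dominating $\|h_k(t)\|$, derived from the scalar majorant ODE associated to (\ref{iteration}) with $V_i$ replaced by its majorant $\frac{M}{1 - (x-x_0)/\rho}$; solving that scalar Young equation shows $u(t)$ has a positive radius of convergence in $t$, which furnishes $T$.

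The final step is the identification $X_t = x_0 + \sum_k g_k(t)$. From the proof of the Proposition we already have $X_t = x_0 + \sum_{k=1}^N g_k(t) + R_N(t)$ with $R_N(t) = \sum_{|I|=N+1}\int_{\triangle^{N+1}[0,t]} (V_{i_1}\cdots V_{i_{N+1}}\pi^j)(X^\cdot_s)\, dy^I$ (here with $\epsilon=1$). Bounding $R_N(t)$ by the same fractional-calculus iterated-integral estimate — now using that $X_s$ stays in a compact set on $[0,T]$, so $\|V_{i_1}\cdots V_{i_{N+1}}\pi^j(X_s)\|$ is controlled by the analyticity bound $M_1^{N+1}(N+1)!$ as in (ii) — gives $\|R_N(t)\| \le (A t^\beta)^{N+1}(N+1)!^{1-\beta} \to 0$ as $N\to\infty$ for $t<T$, provided $T$ was chosen small enough. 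Combined with the convergence of the series, this yields the claim. The main obstacle, as indicated, is showing the radius $T$ is genuinely positive despite the factorial growth of the $P^j_I$'s: this forces one to exploit analyticity through a majorant (Cauchy–Kovalevskaya-style) comparison argument rather than naive term-by-term estimation, which is the technical heart of the theorem.
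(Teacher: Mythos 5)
Your proposal takes a completely different route from the paper, and its final step fails. The paper proves Theorem \ref{general} by a soft complex-analytic argument that involves no estimate on iterated integrals at all: for $z\in\mathbb{C}$, $|z|<\rho$, it considers $dX^z_t=\sum_{i}zV_i(X^z_t)\,dy^i_t$, which by analyticity of the $V_i$'s stays in the ball $B(x_0,C)$ up to a positive time $T(\rho)$; the map $z\mapsto X^z_t$ is smooth and satisfies $\partial X^z_t/\partial\bar z=0$, hence is analytic on $|z|<\rho$, and its Taylor coefficients at $z=0$ are exactly the $g_k(t)$. Choosing $\rho>1$, Cauchy's estimates give geometric decay $\|g_k(t)\|\le C'\rho^{-k}$, and the power series converges to $X^1_t=X_t$ at $z=1$. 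This is precisely how the paper captures the cancellations among the $(d+1)^k$ words of length $k$ that any word-by-word estimate misses.

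The concrete gap in your write-up is the identification step. You correctly diagnose that the term-by-term bound $(d+1)^kM_1^k\,k!\,(Kt^\beta)^k/\Gamma(k\beta+1)$ diverges for every $t>0$, because $(k!)^{1-\beta}$ is super-exponential and cannot be beaten by any geometric factor; this is why the paper's quantitative Theorem \ref{convergence} needs the sub-factorial hypothesis $\|P_I\|\le\Gamma(\gamma|I|)M^{|I|}$ and why Theorem \ref{general} is proved by the soft argument instead. But you then bound the remainder by $\|R_N(t)\|\le(At^\beta)^{N+1}((N+1)!)^{1-\beta}$ and claim it tends to $0$ for $T$ small: it does not, for exactly the reason you yourself gave two paragraphs earlier, since $\sup_{x\in K}\|V_{i_1}\cdots V_{i_{N+1}}\pi^j(x)\|$ generically grows like $(N+1)!$ for analytic vector fields (already for $V=x^2\partial_x$). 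So the equality $X_t=x_0+\sum_kg_k(t)$ is not established by your argument. Your majorant idea for the convergence of the series is plausible and, if executed in the Banach algebra of H\"older-type norms so that the Young-integration step closes the recursion in the bookkeeping parameter $\epsilon$ rather than in $t$, it essentially reconstructs the paper's analyticity-in-$z$ argument; the identification should then come from passing to the limit in the partial sums of the inductive system (\ref{iteration}) as in the paper's Lemma \ref{convergence}, not from a direct bound on $R_N$.
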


\begin{proof}

Let us fix $\rho >0$. For $z \in \mathbb{C}$, $ |z| <\rho $, we consider the complex differential equation
\[
\begin{cases}
   dX^{z}_{t}=\displaystyle \sum^{d}_{i=0}z V_{i}(X^{z}_{t})dy^{i}_{t}\\
   X^{z}_0=x_0,
\end{cases}
\]
which, by analycity of the $V_i$'s,  is well defined at least up to the positive time
\[
T(\rho)=\inf_{z, |z| <\rho} \inf \{ t \ge 0, X_t^z \notin B(x_0,C) \}.
\]
As we have seen before, for a fixed $t \ge 0$, the map $z \to X_t^z$ is seen to be smooth. We claim it is even analytic. Indeed, differentiating with respect to $\bar{z}$ the integral expression
\[
X^{z}_{t}=x_0+\int_0^t  \sum^{d}_{i=0}z V_{i}(X^{z}_{s})dy^{i}_{s},
\]
immediately gives $\frac{\partial X_t^z}{\partial \bar{z}}=0$. That is $z \to X_t^z$ is analytic on the disc  $ |z| <\rho $. 

On the other hand, $x_0+\sum^{\infty}_{k=1}z^k g_{k}(t)$ is precisely the Taylor expansion of $X_t^z$. Choosing then $\rho >1$, finishes the proof.
\end{proof}

\subsubsection{Quantitative bounds}

Theorem \ref{general} is very general but gives few informations concerning the radius of convergence or the  speed of convergence of the Taylor expansion 

\[
x_0+ \sum^{\infty}_{k=1}g_{k}(t).
\]
In this section, we shall be interested in more quantitative bounds.
\begin{lemma}\label{convergence}

Let $x_0+\displaystyle \sum^{\infty}_{k=1}g_{k}(t)$ be the Taylor expansion associated with the equation (\ref{dep3}). For $ r>1$, we define
\[
 T_{C}(r)=\inf\left(t,~\displaystyle \sum^{\infty}_{k=1}r^{k}\|g_{k}(t)\| \geq C\right).
\]
For every $t \le T_C(r)$ we have
\[
X_t=x_0+\displaystyle \sum^{\infty}_{k=1}g_{k}(t).
\]
\end{lemma}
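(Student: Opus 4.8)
The plan is to go back to the complex differential equation used in the proof of Theorem~\ref{general} and run a continuation (bootstrap) argument, using the hypothesis $\sum_k r^k\|g_k(t)\|<C$ to keep the complexified solution inside the ball $B(x_0,C)$, on which the $V_i$ are analytic. For $\rho>0$ put $T(\rho)=\inf\{t\ge 0:\sup_{|z|\le\rho}\|X^z_t-x_0\|\ge C\}$. Exactly as in the proof of Theorem~\ref{general}, $T(\rho)>0$ (since $(t,z)\mapsto X^z_t$ is jointly continuous and $X^z_0\equiv x_0$), and for $t<T(\rho)$ the map $z\mapsto X^z_t$ is holomorphic on $\{|z|<\rho\}$, continuous on $\{|z|\le\rho\}$, with Taylor expansion $X^z_t=x_0+\sum_k z^k g_k(t)$.

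The heart of the proof is the claim that $T(r)\ge T_C(r)$. Suppose not, and set $\tau:=T(r)<T_C(r)$, so in particular $\tau<\infty$. Writing $M(t):=\sup_{|z|\le r}\|X^z_t-x_0\|$, continuity gives $M(t)<C$ for $t<\tau$ and $M(\tau)=C$. For $t<\tau$, since $z\mapsto X^z_t$ is holomorphic on $\{|z|<r\}$ and continuous up to the boundary, the Cauchy estimates on the circle $|z|=r$ give $\|g_k(t)\|\le r^{-k}M(t)\le Cr^{-k}$; by continuity in $t$ of the iterated integrals this persists at $t=\tau$, so $\sup_{s\in[0,\tau]}\|g_k(s)\|\le Cr^{-k}$. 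Hence, for each fixed $|z|<r$, the series $\sum_k z^k g_k(s)$ converges uniformly in $s\in[0,\tau]$ (it is dominated by $C\sum_k(|z|/r)^k$); therefore $s\mapsto x_0+\sum_k z^k g_k(s)$ is continuous on $[0,\tau]$ and agrees with $X^z_s$ for $s<\tau$, so $X^z_\tau=x_0+\sum_k z^k g_k(\tau)$ and $\|X^z_\tau-x_0\|\le\sum_k|z|^k\|g_k(\tau)\|$ for all $|z|<r$. Taking the supremum over $|z|<r$ (which recovers $\sum_k r^k\|g_k(\tau)\|$ by monotone convergence) and using $M(\tau)=C$ gives $C\le\sum_k r^k\|g_k(\tau)\|$, i.e. $\tau\ge T_C(r)$, contradicting $\tau<T_C(r)$.

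Granting $T(r)\ge T_C(r)$, the conclusion is immediate. For $t<T_C(r)\le T(r)$ the proof of Theorem~\ref{general} gives $X^z_t=x_0+\sum_k z^k g_k(t)$ for $|z|<r$; since $r>1$ we may take $z=1$, obtaining $X_t=x_0+\sum_k g_k(t)$ with $\sum_k\|g_k(t)\|\le\sum_k r^k\|g_k(t)\|<C<\infty$. When $T_C(r)<\infty$, the endpoint $t=T_C(r)$ follows by a limiting argument: now $T(r)\ge T_C(r)$ forces $\|g_k(s)\|\le Cr^{-k}$ for all $s\le T_C(r)$, so $\sum_k g_k(s)$ converges uniformly on $[0,T_C(r)]$, the map $t\mapsto x_0+\sum_k g_k(t)$ is continuous there, and letting $t\uparrow T_C(r)$ in $X_t=x_0+\sum_k g_k(t)$ yields the identity at $t=T_C(r)$ as well. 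The main obstacle is exactly this continuation step: a priori the complexified trajectory could leave $B(x_0,C)$ before time $T_C(r)$, and what prevents it is the Cauchy bound $\|g_k(t)\|\le M(t)r^{-k}$ together with the passage to the limit at the would‑be exit time $\tau$, which forces $\sum_k r^k\|g_k(\tau)\|$ to have already reached $C$; a secondary point of care is that the limiting identity at $\tau$ is only available on the open disc $\{|z|<r\}$, which is nonetheless enough.
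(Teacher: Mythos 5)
Your argument is correct, but it takes a genuinely different route from the paper's. You stay entirely on the ``flow'' side: you reuse the complexified solution $X^z_t$ from the proof of Theorem \ref{general}, introduce the exit time $T(r)$ of that flow from the ball $B(x_0,C)$ on which the $V_i$ are analytic, and prove $T(r)\ge T_C(r)$ by a bootstrap --- Cauchy estimates on $|z|=r$ give $\|g_k(t)\|\le C r^{-k}$ up to the would-be exit time $\tau$, uniform convergence lets you pass to the limit $t\uparrow\tau$ in $X^z_t=x_0+\sum_k z^k g_k(t)$, and the resulting bound $C=M(\tau)\le\sum_k r^k\|g_k(\tau)\|$ forces $\tau\ge T_C(r)$; setting $z=1$ then finishes. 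The paper instead works on the ``equation'' side: it uses the inductive system (\ref{iteration}) to rewrite the partial sum $\sum_{k=1}^m\epsilon^k g_k$ as a Young integral of a Taylor polynomial of the $V_i$ (identity (\ref{poly id})), controls the tail of that polynomial by the Cauchy-type estimate (\ref{remainder}) on $G_i(\epsilon)=V_i\left(x_0+\sum_k\epsilon^kg_k(t)\right)$, lets $m\to\infty$ to conclude that $x_0+\sum_k\epsilon^kg_k$ solves the integral equation, and concludes by uniqueness of solutions. Your route buys economy: you never manipulate the composition coefficients $C^k_i$ nor justify passing to the limit inside the Young integral, leveraging Theorem \ref{general} instead; the price is that the exit-time argument hinges on joint continuity of $(t,z)\mapsto X^z_t$ up to the closed disc $|z|\le r$ and on well-posedness of the complexified equation until the exit time, which is at the same level of informality as the paper's own assertion that $T(\rho)>0$ in Theorem \ref{general}, so the trade is fair. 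Both arguments reach the endpoint $t=T_C(r)$, yours by uniform convergence of $\sum_k g_k$ on $[0,T_C(r)]$, the paper's by the $t\wedge T_C(r)$ device.
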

\begin{remark}
By using a similar argument as in the proof of Theorem \ref{general},  it is seen that in the previous theorem, we always have   $T_C(r)>0$. 
\end{remark}
\begin{proof}
Consider the following functions:
\[
G_{i}(\epsilon)=V_{i}\left(x_0+\displaystyle \sum_{k=1}^{\infty}\epsilon^{k}g_{k}(t)\right)
\]
\noindent Then by the definition of $T_{C}(r)$, if $t<T_{C}(r)$, $G_{i}(\epsilon)$ is analytic on the disc $\{\epsilon\in \mathbb{C}:~~\|\epsilon\|< r\}$. Hence for any $r'<r$ and if $\|\epsilon\|<r'$, we have:
\begin{align}
&\left\|G_{i}(\epsilon)-(V_{i}(x_0)+\displaystyle\sum^{m-1}_{k=1}C^{k}_{i}(g_{1}(t),g_{2}(t),..,g_{k}(t)))\epsilon^{k}\right\|\\
\leq & M^{i}_{r'}.\displaystyle \sum^{\infty}_{k=m}\left(\frac{\epsilon}{r'}\right)^{m}\\
\leq & K^{i}. R_{m} \label{remainder}
\end{align}

 where $M^{i}_{r'}=\displaystyle \sup_{\|\epsilon\|\leq r'}G_{i}(\epsilon)\leq \sup_{\|x\|\leq C}\|V_{i}(x+x_0)\|=K^{i}$ And $R_{m}$ denotes the tail of the convergent geometric series $\displaystyle \sum^{\infty}_{k=1}\left(\frac{\epsilon}{r'}\right)^{k}$. We know that $\displaystyle R_{m}\longrightarrow 0, as~~ m \longrightarrow +\infty$. Now by the definition of $g_{k}(t)$ we write:

\begin{align*}
\sum^{m}_{k=1}\epsilon^{k}g_{k}(t)=&\sum^{d}_{i=0}\sum^{m}_{k=1}\epsilon^{k}\int^{t}_{0}C^{k-1}_{i}\left(g_{1}(s),g_{2}(s),...,g_{k-1}(s)\right)dy^{i}_{s}\\
                                  =&\sum^{d}_{i=0}\epsilon\sum^{m}_{k=1}\int^{t}_{0}\left(\epsilon^{k-1}C^{k-1}_{i}\left(g_{1}(s),g_{2}(s),...,g_{k-1}(s)\right)\right)dy^{i}_{s}\\
                                  =&\sum^{d}_{i=0}\epsilon\int^{t}_{0}\left(V_{i}(x_0)+\sum^{m-1}_{k=1}\epsilon^{k}C^{k}_{i}\left(g_{1}(s),....,g_{k}(s)\right)\right)dy^{i}_{s}
\end{align*}

 Since when $t<T_{C}(r)$, the series $\displaystyle \sum^{\infty}_{k=1}\epsilon^{k}g_{k}(t)\leq C$ ,we replace t by $t\wedge T_{C}(r)$ in the previous equation:

\begin{align}\label{poly id}
\sum^{m}_{k=1}\epsilon^{k}g_{k}\left(t\wedge T_{C}(r)\right)=\sum^{d}_{i=0}\epsilon\int^{t\wedge T_{C}(r)}_{0}\left(V_{i}(x_0)+\sum^{m-1}_{k=1}\epsilon^{k}C^{k}_{i}\left(g_{1}(s),....,g_{k}(s)\right)\right)dy^{i}_{s}
\end{align}

 with the estimate (\ref{remainder}), we have when $\epsilon < r'$,

\begin{align}\label{estmate}
\left\|G_{i}\left(\epsilon\right)-\left(V_{i}(x_0)+\displaystyle\sum^{m-1}_{k=1}C^{k}_{i}\left(g_{1}(t),g_{2}(t),....g_{k}(t)\right)\epsilon^{k}\right)\right\| \leq K^{i}R_{m}
\end{align}

Therefore, since $r>1$ we can choose $1<r'<r$ and let $m\rightarrow \infty$, then the right hand side of (\ref{estmate}) tends to 0. Therefore,  by letting $m\rightarrow \infty$ in equation (\ref{poly id}) we have:
\begin{align*}
x_0+\sum^{\infty}_{k=1}\epsilon^{k}g_{k}(t\wedge T_{C}(r))
=&x_0+\sum^{d}_{i=0}\int^{t\wedge T_{C}(r)}_{0}\epsilon G_{i}(t)dy^{i}_s\\
=&x_0+\sum^{d}_{i=0}\int^{t\wedge T_{C}(r)}_{0}\epsilon V_{i}\left(x_0+\sum^{\infty}_{k=1}\epsilon^{k}g_{k}(s)\right)dy^{i}_s
\end{align*}

And the result follows when we take $\epsilon =1$
\end{proof}

The crucial estimates we need to control the speed of convergence of the Taylor expansion are provided by the following proposition.

\begin{proposition}\label{iterated}
Let $\alpha$ such that $1-\beta <\alpha <\frac{1}{2}$ and  fix $T>0$. For $0<t\leq T$, we have
\[
\left|  \int_{0 \leq t_1 \leq ... \leq t_n \leq t}
dy^{i_1}_{t_1}  \cdots  dy^{i_n}_{t_n} \right| \le\frac{\Gamma(1-2\alpha)}{\Gamma(n(1-2\alpha)) } C_{\alpha}(T)^{n-1} \Lambda_\alpha (T,y)^{n-1} \| y \|_{\alpha,T,\infty}
\]
where
\begin{align*}
&\Lambda_\alpha (T,y):=\frac{1}{\Gamma(1-\alpha) } \sup_{0 <s<t<T} \| (D^{1-\alpha}_{t-}y_{t-})(s) \|\\
&C_{\alpha}(T)=\left(\frac{1}{\alpha(1-\alpha)}+T^{\alpha}\right)\left(1+T^{\alpha}\right)\Gamma(1-2\alpha)T^{1-2\alpha}\\
&\| y \|_{\alpha,T,\infty} =\sup_{0 \le t \le T} \left( \| y(t) \| + \int_0^t \frac{\| y(t) -y(s) \| }{(t-s)^{1+\alpha} } ds \right).\
\end{align*}

\end{proposition}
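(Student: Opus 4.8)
The plan is to estimate the iterated Young integral by induction on $n$, using the fractional integration-by-parts representation of the Young integral together with the function spaces recalled in Section 2.1. The base case $n=1$ is just the estimate $\|\int_0^t dy^{i_1}_{t_1}\| = \|y(t)-y(0)\| \le \|y\|_{\alpha,T,\infty}$. For the inductive step, I would write the innermost integral as a function of the upper endpoint, namely $F_n(t_n) := \int_{0\le t_1 \le \cdots \le t_{n-1} \le t_n} dy^{i_1}_{t_1}\cdots dy^{i_{n-1}}_{t_{n-1}}$, and then express $\int_0^t F_n(s)\, dy^{i_n}_s$ via the fractional-calculus formula for Young integrals (the one appearing in Nualart--R\u{a}scanu), which bounds $|\int_0^t F_n\, dy^{i_n}|$ by $\Lambda_\alpha(T,y)$ times an expression involving $\int_0^t$ of $F_n$ and a double integral of the increments of $F_n$, all against kernels of the form $(t-s)^{-\alpha}$ and $(s-u)^{-\alpha-1}$.

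The main technical point is to propagate the right \emph{time dependence} through the induction: the key is to carry the inductive hypothesis in the sharp form
\[
\| F_n(t) \| \le \frac{\Gamma(1-2\alpha)}{\Gamma(n(1-2\alpha))}\, C_\alpha(T)^{n-1}\, \Lambda_\alpha(T,y)^{n-1}\, \|y\|_{\alpha,T,\infty}\, t^{(n-1)(1-2\alpha)}
\]
(or something of this shape with the correct power of $t$), rather than just the bound at $t=T$. The factor $t^{(n-1)(1-2\alpha)}$ is what makes the Gamma-function recursion work: when one plugs a bound of the form $C\, t^{(n-1)(1-2\alpha)}$ into the fractional-integral estimate for the Young integral against $y$, one performs Beta-integrals of the type $\int_0^t s^{(n-1)(1-2\alpha)}(t-s)^{-\alpha}\,ds = t^{(n-1)(1-2\alpha)+1-\alpha}\, B((n-1)(1-2\alpha)+1, 1-\alpha)$, and the Beta function $B(a,b)=\Gamma(a)\Gamma(b)/\Gamma(a+b)$ produces exactly the telescoping of Gamma factors that yields $\Gamma((n-1)(1-2\alpha))/\Gamma(n(1-2\alpha))$ after also handling the $(s-u)^{-\alpha-1}$ double integral. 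One must check that the exponents are admissible: since $\alpha<1/2$ we have $1-2\alpha>0$, so all the powers of $t$ are nonnegative and the Beta integrals converge; and since $\alpha<1$ the kernel $(t-s)^{-\alpha}$ is integrable.

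Concretely the steps are: (i) fix $\alpha$ with $1-\beta<\alpha<1/2$, so that $y\in W^{1-\alpha,\infty}$ and $\Lambda_\alpha(T,y)<\infty$; (ii) recall the Young integral estimate $|\int_0^t g\, dh| \le \Lambda_\alpha(T,y)\int_0^t \big(\frac{|g(s)|}{s^\alpha} + \alpha\int_0^s\frac{|g(s)-g(u)|}{(s-u)^{\alpha+1}}du\big)ds$ applied with $h=y^{i_n}$ and $g=F_n$; (iii) insert the inductive bound on $\|F_n(s)\|$ and on its increments $\|F_n(s)-F_n(u)\|$ — the latter obtained from the same Young estimate on the subinterval $[u,s]$, which is where the $\|y\|_{\alpha,T,\infty}$-type norm with its own increment integral enters; (iv) evaluate the resulting Beta integrals and collect constants into $C_\alpha(T)$, checking that the definition of $C_\alpha(T)$ given in the statement is exactly what absorbs the $\frac{1}{\alpha(1-\alpha)}$, the $T^\alpha$ factors and the $T^{1-2\alpha}$ coming from converting powers of $t$ to powers of $T$ at the last step. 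The step I expect to be the real obstacle is (iii)–(iv): getting the estimate on the increment $\|F_n(s)-F_n(u)\|$ with a clean power $(s-u)^{\text{something}}$ and a constant that still closes the induction, and then bookkeeping the Gamma/Beta identities so that the constant $C_\alpha(T)$ comes out in precisely the stated closed form rather than merely up to an $n$-dependent factor.
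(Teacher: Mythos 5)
Your skeleton is the right one---induction on $n$ through the Nualart--Rascanu fractional-calculus estimate for the Young integral, followed by a telescoping product of Beta/Gamma factors---but the step you yourself flag as ``the real obstacle'' is a genuine gap, and the paper closes it with an idea your plan does not contain. You propose to carry a pointwise inductive bound on $\|F_n(t)\|$ and to derive \emph{separately} a bound on the increments $\|F_n(s)-F_n(u)\|$ ``with a clean power of $(s-u)$''; that separate increment estimate is exactly what is hard to produce in a form that closes the induction, and the paper never attempts it. Instead it inducts on the single combined quantity
\[
\Phi_k(t)\;=\;\left| I_k(t)\right|+\int_0^t\frac{\left|I_k(t)-I_k(s)\right|}{(t-s)^{1+\alpha}}\,ds ,
\qquad I_k(t)=\int_{0\le t_1\le\cdots\le t_k\le t}dy^{i_1}_{t_1}\cdots dy^{i_k}_{t_k},
\]
that is, the $W^{\alpha,1}$-type norm of the iterated integral with the increment integral built in. Proposition 4.1 of Nualart--Rascanu gives a \emph{closed} recursion for this object,
\[
\Phi_{k+1}(t)\;\le\;\Lambda_\alpha(T,y)\left(\frac{1}{\alpha(1-\alpha)}+T^\alpha\right)\int_0^t\left((t-r)^{-2\alpha}+r^{-\alpha}\right)\Phi_k(r)\,dr ,
\]
so no pointwise H\"older bound on the increments of $F_n$ is ever needed: the increments are part of the inducted quantity. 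This is the missing idea in your proposal, and without it step (iii) does not go through as written.

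Once that substitution is made, the rest of your plan matches the paper. One bounds $(t-r)^{-2\alpha}+r^{-\alpha}\le(1+T^\alpha)\,t^{2\alpha}(t-r)^{-2\alpha}r^{-2\alpha}$, unrolls the recursion $k$ times into a $k$-fold convolution of the kernels $(r_{j+1}-r_j)^{-2\alpha}$ with an innermost factor $r_1^{-2\alpha}$, and evaluates it by successive Beta integrals; the product $\prod_{i=1}^{k}B(1-2\alpha,i(1-2\alpha))$ telescopes to $\Gamma(1-2\alpha)^{k+1}/\Gamma((k+1)(1-2\alpha))$, exactly your mechanism. A small correction to your guessed exponent: the power actually propagated is $r^{\,j(1-2\alpha)-2\alpha}$ rather than $r^{\,j(1-2\alpha)}$---there is a residual $r^{-2\alpha}$ singularity at the origin inherited from the $s^{-\alpha}$ term of the Young estimate, integrable because $\alpha<1/2$---and the leftover powers of $t$ are then crudely bounded by $T^{(1-2\alpha)k}$ and absorbed into $C_\alpha(T)^k$, which is why $C_\alpha(T)$ contains the factor $\Gamma(1-2\alpha)\,T^{1-2\alpha}$.
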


\begin{proof}
Since $y: \mathbb{R}_{\ge 0} \rightarrow \mathbb{R}^{d+1}$ is a H\"{o}lder path with exponent $\beta>1/2$, and since $\alpha +\beta >1$, we have $y^{i}_{t}\in W^{1-\alpha,\infty}_{T}(0,T)$ for $i=0,1,2\cdots ,d $. For $t\leq T$, we denote $\displaystyle I_k(t)= \int_{0 \leq t_1 \leq ... \leq t_k \leq t}dy^{i_1}_{t_1}  \cdots  dy^{i_k}_{t_k}$, $k=1,\cdots,n$. Clearly, $I_{k}(t)\in W^{\alpha,1}_{0}(0,T)$ for $k=1,2,\cdots,n$. Therefore, Proposition 4.1 in Nualart-Rascanu (\cite{Nualart}) gives
\begin{align*}
&\left| I_{k+1}(t) \right| +\int_0^t \frac{\left|I_{k+1}(t) -I_{k+1}(s) \right| }{(t-s)^{1+\alpha} }ds\\
\le & \Lambda_\alpha (T,y) \left( \frac{1}{\alpha(1-\alpha)} +T^\alpha \right) \int_0^t \left( (t-r_{k})^{-2\alpha}+r^{-\alpha}_{k} \right) \left( \left| I_{k}(r_{k}) \right| +\int_0^{r_{k}} \frac{\left|I_{k}(r_{k}) -I_{k}(s) \right| }{(r_{k}-s)^{1+\alpha} }ds \right)dr_{k}.
\end{align*}
Using now the basic estimate
\[
(t-r)^{-2\alpha}+r^{-\alpha} \le \left( 1+T^\alpha \right) t^{2\alpha} (t-r)^{-2\alpha} r^{-2\alpha},
\]

we get
\begin{align*}
&\left| I_{k+1}(t) \right| +\int_0^t \frac{\left|I_{k+1}(t) -I_{k+1}(s) \right| }{(t-s)^{1+\alpha} }ds\\
\le & \Lambda_\alpha (T,y) \left( \frac{1}{\alpha(1-\alpha)} +T^\alpha \right)\left(1+T^{\alpha}\right) \int_0^t \left( t^{2\alpha}(t-r_{k})^{-2\alpha}r^{-2\alpha}_{k} \right) \left( \left| I_{k}(r_{k}) \right| +\int_0^{r_{k}} \frac{\left|I_{k}(r_{k}) -I_{k}(s) \right| }{(r_{k}-s)^{1+\alpha} }ds \right)dr_{k}.
\end{align*}
Iterating this inequality $k$ times, we get:
\begin{align*}
&\left| I_{k+1}(t) \right| +\int_0^t \frac{\left|I_{k+1}(t) -I_{k+1}(s) \right| }{(t-s)^{1+\alpha} }ds\\
& \le \Lambda_\alpha (T,y)^{k} \left( \frac{1}{\alpha(1-\alpha)} +T^\alpha \right)^{k}\left(1+T^{\alpha}\right)^{k}\|y\|_{\alpha,T,\infty} \int_0^t \left( t^{2\alpha}(t-r_{k})^{-2\alpha}r^{-2\alpha}_{k} \right)\int_0^{r_k} \left( r_{k}^{2\alpha}(r_{k}-r_{k-1})^{-2\alpha}r^{-2\alpha}_{k-1} \right)\\
&\int_0^{r_{k-1}} \left( r_{k-1}^{2\alpha}(r_{k-1}-r_{k-2})^{-2\alpha}r^{-2\alpha}_{k-2} \right)
\cdots \int_0^{r_2} \left( r_{2}^{2\alpha}(r_{2}-r_{1})^{-2\alpha}r^{-2\alpha}_{1} \right)dr_{1}dr_{2}\cdots dr_{k}\\
& \le \Lambda_\alpha (T,y)^{k} \left( \frac{1}{\alpha(1-\alpha)} +T^\alpha \right)^{k}\left(1+T^{\alpha}\right)^{k}\|y\|_{\alpha,T,\infty} \int_0^t \left( t^{2\alpha}(t-r_{k})^{-2\alpha}\right)\int_0^{r_k} \left(r_{k}-r_{k-1}\right)^{-2\alpha} \int_0^{r_{k-1}} \left(r_{k-1}-r_{k-2}\right)^{-2\alpha}\\
&\cdots \int_0^{r_2}\left(r_{2}-r_{1}\right)^{-2\alpha}r^{-2\alpha}_{1}dr_{1}dr_{2}\cdots dr_{k}
\end{align*}

Note that by the substitution $u=\displaystyle \frac{r_1}{r_2}$, we have:
\begin{align*}
\int_0^{r_2}\left(r_{2}-r_{1}\right)^{-2\alpha}r^{-2\alpha}_{1}dr_{1}=r_{2}^{1-4\alpha}
\int_0^{1}\left(1-u\right)^{-2\alpha}u^{-2\alpha}du=r_{2}^{1-4\alpha}B(1-2\alpha, 1-2\alpha)
\end{align*}

where
\[
B(x,y)=\frac{\Gamma(x)\Gamma(y)}{\Gamma(x+y)}=\int_0^{1}\left(1-u\right)^{x-1}u^{y-1}du
\]
is the Beta function. \\
Then plug $r_{2}^{1-4\alpha}B(1-2\alpha, 1-2\alpha)$ into the previous integrand and iterate this process, we get:
\begin{align*}
&\left| I_{k+1}(t) \right| +\int_0^t \frac{\left|I_{k+1}(t) -I_{k+1}(s) \right| }{(t-s)^{1+\alpha} }ds\\
& \le \Lambda_\alpha (T,y)^{k} \left( \frac{1}{\alpha(1-\alpha)} +T^\alpha \right)^{k}\left(1+T^{\alpha}\right)^{k}\|y\|_{\alpha,T,\infty}(T^{1-2\alpha})^{k}\prod^{k}_{i=1}B(1-2\alpha,i(1-2\alpha))\\
&=\Lambda_\alpha (T,y)^{k} \left( \frac{1}{\alpha(1-\alpha)} +T^\alpha \right)^{k}\left(1+T^{\alpha}\right)^{k}\|y\|_{\alpha,T,\infty}(T^{1-2\alpha})^{k}\prod^{k}_{i=1}\frac{\Gamma(1-2\alpha)\Gamma(i(1-2\alpha))}
{\Gamma((i+1)(1-2\alpha))}\\
&=\frac{\Gamma(1-2\alpha)}{\Gamma(k+1)(1-2\alpha)}\Lambda_\alpha (T,y)^{k} \left(\frac{1}{\alpha(1-\alpha)} +T^\alpha \right)^{k}\left(1+T^{\alpha}\right)^{k}\Gamma(1-2\alpha)^{k}(T^{1-2\alpha})^{k}\|y\|_{\alpha,T,\infty}
\end{align*}
\end{proof}

We are now in position  to give an estimate of the speed of   convergence of the stochastic Taylor expansion. The following lemma is taylor made for that purpose:

\begin{lemma}\label{bound}
Let $\alpha$ such that $1-\beta <\alpha <\frac{1}{2}$ and $\gamma<1-2\alpha$. There exists a constant $K_{\alpha,\gamma} >0$ such that for every $N \ge 0$ and $ x \ge 0$,
\[
\sum_{k=N+1}^{+\infty} \frac{\Gamma(k \gamma)}{\Gamma(k(1-2\alpha))} x^{k-1} \le K_{\alpha,\gamma} 
\begin{cases}
e^{2 x^{\frac{1}{1-2\alpha-\gamma}}} , \quad \text{if } N = 0 \\
 \frac{x^N e^{2 x^{\frac{1}{1-2\alpha-\gamma}}}}{\Gamma((1-2\alpha-\gamma)N)}, \quad \text{if } N \ge 1
\end{cases}
\]
\end{lemma}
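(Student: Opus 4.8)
The strategy is to bound each coefficient $\Gamma(k\gamma)/\Gamma(k(1-2\alpha))$ by $B_\star/\Gamma(k\delta)$, where $\delta:=1-2\alpha-\gamma\in(0,1)$ (we use the implicit hypothesis $\gamma>0$, needed for $\Gamma(k\gamma)$ to make sense), and then to recognise the resulting series as a shifted Mittag--Leffler series and estimate it with the correct dependence on the shift. For the first reduction, write $1-2\alpha=\gamma+\delta$ and use the Beta function:
\[
\frac{\Gamma(k\gamma)}{\Gamma(k(1-2\alpha))}=\frac{1}{\Gamma(k\delta)}\cdot\frac{\Gamma(k\gamma)\Gamma(k\delta)}{\Gamma(k(\gamma+\delta))}=\frac{B(k\gamma,k\delta)}{\Gamma(k\delta)}.
\]
By Stirling's formula $B(k\gamma,k\delta)\to 0$ as $k\to\infty$, hence $B_\star:=\sup_{k\ge1}B(k\gamma,k\delta)<\infty$ and $\Gamma(k\gamma)/\Gamma(k(1-2\alpha))\le B_\star/\Gamma(k\delta)$ for all $k\ge1$. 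Therefore, for every $N\ge0$,
\[
\sum_{k=N+1}^{\infty}\frac{\Gamma(k\gamma)}{\Gamma(k(1-2\alpha))}x^{k-1}\le B_\star\, x^{N}\sum_{m\ge0}\frac{x^{m}}{\Gamma(\delta m+(N+1)\delta)}.
\]

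The core of the argument is the following Mittag--Leffler type estimate: there is $C_\delta>0$ such that for all $\nu\ge\delta$ and all $x\ge0$,
\[
\sum_{m\ge0}\frac{x^{m}}{\Gamma(\delta m+\nu)}\le\frac{C_\delta}{\Gamma(\nu)}\,e^{2x^{1/\delta}}.
\]
I would prove it in two moves. First, isolate the $\nu$-dependence: from $B(\nu,\delta m)=\Gamma(\nu)\Gamma(\delta m)/\Gamma(\delta m+\nu)$ and the elementary bound $B(a,b)\le 1/a$ (valid whenever $b\ge1$, since then $(1-t)^{b-1}\le1$ on $[0,1]$), one gets $\Gamma(\nu)/\Gamma(\delta m+\nu)\le 1/(\delta\,\Gamma(\delta m))$ for every $m$ with $\delta m\ge1$ and every $\nu\ge\delta$; the finitely many $m$ with $\delta m<1$ are handled uniformly in $\nu\ge\delta$ because $\nu\mapsto\Gamma(\nu)/\Gamma(\delta m+\nu)$ is continuous and tends to $0$ at $+\infty$. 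This reduces the claim to $\sum_{m\ge1}x^{m}/\Gamma(\delta m)\le C_\delta\, e^{2x^{1/\delta}}$ together with the trivial fact that a fixed polynomial in $x$ is dominated by a multiple of $e^{2x^{1/\delta}}$. Second, prove this last inequality by hand: using $\Gamma(\delta m)=\Gamma(\delta m+1)/(\delta m)\ge\lfloor\delta m\rfloor!/(\delta m)$, substituting $y=x^{1/\delta}$, grouping the indices $m$ by the value $n=\lfloor\delta m\rfloor$ (at most $\lceil1/\delta\rceil$ indices per value), and comparing with $\sum_{n\ge0}(n+1)y^{n+1}/n!=y(y+1)e^{y}$, one finds $\sum_{m\ge1}x^{m}/\Gamma(\delta m)\le\lceil1/\delta\rceil\,y(y+1)e^{y}+(\text{polynomial in }y)$ for $y\ge1$, which is $\le C_\delta e^{2y}$ since a polynomial times $e^{-y}$ is bounded; for $x\le1$ the sum is bounded by continuity. (This is incidentally where the constant $2$ in the statement originates.)

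Combining the two steps with $\nu=(N+1)\delta\ge\delta$ yields
\[
\sum_{k=N+1}^{\infty}\frac{\Gamma(k\gamma)}{\Gamma(k(1-2\alpha))}x^{k-1}\le B_\star C_\delta\,\frac{x^{N}e^{2x^{1/\delta}}}{\Gamma((N+1)\delta)}.
\]
For $N=0$ this is already the claimed bound. For $N\ge1$ one converts $\Gamma((N+1)\delta)$ into $\Gamma(\delta N)=\Gamma((1-2\alpha-\gamma)N)$ via the identity $\Gamma(\delta N)/\Gamma((N+1)\delta)=B(\delta N,\delta)/\Gamma(\delta)$ and the finiteness of $B_1:=\sup_{N\ge1}B(\delta N,\delta)$ (again $B(\delta N,\delta)\to0$), which gives the statement with, say, $K_{\alpha,\gamma}=B_\star C_\delta\max(1,B_1)/\Gamma(\delta)$; recall $1/\delta=1/(1-2\alpha-\gamma)$.

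The one genuinely delicate point is the Mittag--Leffler estimate of the second paragraph: it must be obtained in the sharp quantitative form, in particular carrying the factor $1/\Gamma(\nu)$ so that it survives the unbounded shift $\nu=(N+1)\delta$, and not merely the qualitative fact that $E_\delta$ is an entire function of order $1/\delta$. Everything else is routine bookkeeping with the Beta function and with the observation that a sequence converging to $0$ is bounded.
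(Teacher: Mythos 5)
Your proof is correct and follows essentially the same route as the paper's: both arguments use Beta-function identities to peel off the factor $x^{N}/\Gamma((1-2\alpha-\gamma)N)$ and reduce the tail to the Mittag--Leffler-type series $\sum_{k}x^{k}/\Gamma((k+1)(1-2\alpha-\gamma))$, which is of order $1/(1-2\alpha-\gamma)$. The only difference is that the paper simply cites Lemma 7.7 of Nualart--Rascanu for the bound $\sum_{k\ge 0}x^{k}/\Gamma((1+k)\delta)\le \frac{4e^{2}}{\delta}e^{2x^{1/\delta}}$, whereas you reprove that estimate by hand; your version is self-contained but adds nothing structurally new.
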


\begin{proof}
We make the proof for $N \ge 1 $ and let the reader adapt the argument when $N=0$. We have
\begin{align*}
\sum_{k=N+1}^{+\infty} \frac{\Gamma(k \gamma)}{\Gamma(k(1-2\alpha))} x^{k-1} &=x^N \sum_{k=0}^{+\infty} \frac{\Gamma((k+N+1) \gamma)}{\Gamma((k+N+1)(1-2\alpha))} x^{k} \\
 & =\frac{x^N}{\Gamma((1-2\alpha-\gamma) N)} \sum_{k=0}^{+\infty} \frac{\Gamma((k+N+1) \gamma)\Gamma((1-2\alpha-\gamma)N)}{\Gamma((k+N+1)(1-2\alpha))} x^{k}  \\
 &\le C_{\alpha,\gamma} \frac{x^N}{\Gamma((1-2\alpha-\gamma)N)} \sum_{k=0}^{+\infty} \frac{x^k}{\Gamma( (k+1)(1-2\alpha-\gamma)) }
\end{align*}

From Lemma 7.7 in \cite{ Nualart}, we have for every $x \ge 0$.
\[
\sum^{\infty}_{k=0}\frac{x^{k}}{\Gamma((1+k)(1-2\alpha-\gamma))}\leq \frac{4e^{2}}{1-2\alpha-\gamma}e^{2x^{\frac{1}{1-2\alpha-\gamma}}}.
\]
This concludes the proof.
\end{proof}

We can now give convenient assumptions to decide the speed of convergence of the Taylor expansion. We remind the reader that if $I=(i_1,\cdots,i_k)$, we denote
\[
 P^{j}_{I}=(V_{i_1}\cdots V_{i_k}\pi^{j})(x_0).
 \]
 We also use the notations introduced in Proposition \ref{iterated}.
\begin{theorem}\label{convergence}
Let $\alpha$ such that $1-\beta <\alpha <\frac{1}{2}$. Let us assume that there exist $M>0$ and $0<\gamma<1-2\alpha$ such that for every word $I \in \{ 0,\cdots, d\}^k$
\begin{align}\label{criteria}
\|P_{I}\|\leq \Gamma(\gamma|I|)M^{|I|}.
\end{align}
Then, for every $r >1$, $T_C(r)>0$ and there exists a constant $ K_{\alpha,\gamma,M,d} >0 $ depending only on the subscript variables such that for every $0 \le  t <T_C(r)$ and $N \ge 1$,
\begin{align}\label{C1}
\left\|X_t-\left(x_0+\sum^{N}_{k=1}g_{k}(t)\right) \right\|\leq K_{\alpha,\gamma,M,d}  \| y \|_{\alpha,t, \infty}  \frac{(M(d+1)\Lambda_\alpha(t,y) C_\alpha(t) )^N }{\Gamma((1-2\alpha-\gamma)N)} e^{2 (M(d+1)\Lambda_\alpha(t,y) C_\alpha(t) )^{\frac{1}{1-2\alpha-\gamma}}}
\end{align}
In particular for every $0 \le  t <T_C(r)$,
\begin{align}\label{C2}
X_t=x_0+\sum^{+\infty}_{k=1}g_{k}(t).
\end{align}
Moreover if $C=+\infty$, that is if the $V_i$'s are analytic on $\mathbb{R}^n$, then (\ref{C1}) and (\ref{C2}) hold for every $t \ge 0$.
\end{theorem}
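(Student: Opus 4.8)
The plan is to combine the three preparatory results — Lemma~\ref{convergence} (which shows that the Taylor series reproduces $X_t$ as long as $t < T_C(r)$), Proposition~\ref{iterated} (the factorial decay bound on a single iterated integral), and Lemma~\ref{bound} (the summation of the resulting tail series) — and then feed in the hypothesis (\ref{criteria}) on the coefficients $P_I$. The only genuine work is bookkeeping: counting words, lining up Gamma-function factors, and identifying the combined constant $M(d+1)\Lambda_\alpha(t,y)C_\alpha(t)$ as the natural variable $x$ in Lemma~\ref{bound}.

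First I would write, for the $j$-th component,
\[
\|g_k(t)\| \le \sum_{|I|=k} \|P_I\| \left| \int_{\triangle^k[0,t]} dy^I \right|,
\]
and bound each factor: the hypothesis gives $\|P_I\| \le \Gamma(\gamma k) M^k$, Proposition~\ref{iterated} gives
\[
\left| \int_{\triangle^k[0,t]} dy^I \right| \le \frac{\Gamma(1-2\alpha)}{\Gamma(k(1-2\alpha))} \big(C_\alpha(t)\Lambda_\alpha(t,y)\big)^{k-1} \|y\|_{\alpha,t,\infty},
\]
and there are $(d+1)^k$ words of length $k$. Multiplying and collecting the $k$-th powers yields
\[
\|g_k(t)\| \le \Gamma(1-2\alpha)\,\|y\|_{\alpha,t,\infty}\, \frac{\Gamma(\gamma k)}{\Gamma(k(1-2\alpha))}\, \frac{\big(M(d+1)\Lambda_\alpha(t,y)C_\alpha(t)\big)^k}{M(d+1)\Lambda_\alpha(t,y)C_\alpha(t)}.
\]
Summing over $k \ge N+1$ and applying Lemma~\ref{bound} with $x = M(d+1)\Lambda_\alpha(t,y)C_\alpha(t)$ produces exactly the right-hand side of (\ref{C1}), after absorbing $\Gamma(1-2\alpha)$, the constant $K_{\alpha,\gamma}$ of Lemma~\ref{bound}, and a fixed power of $x$ into $K_{\alpha,\gamma,M,d}$. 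Taking $N=0$ in this computation shows the full series $\sum_{k\ge 1}\|g_k(t)\|$ converges for every $t$ with finite $\Lambda_\alpha(t,y)$ and $\|y\|_{\alpha,t,\infty}$, in particular on all of $[0, T_C(r)) $; that $T_C(r)>0$ is already noted in the Remark after Lemma~\ref{convergence}. Then Lemma~\ref{convergence} gives $X_t = x_0 + \sum_{k\ge1} g_k(t)$ on $[0,T_C(r))$, which is (\ref{C2}), and (\ref{C1}) is the quantitative tail estimate just derived.

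For the last sentence, when $C=+\infty$ the ball $B(x_0,C)$ is all of $\mathbb{R}^n$, so in the definition of $T_C(r)$ the stopping condition $\sum_k r^k\|g_k(t)\|\ge C$ is never met (the sum is finite for each fixed $t$ by the $N=0$ case above, with $t$ ranging over any bounded interval); hence $T_C(r)=+\infty$ for every $r>1$, and the conclusions hold for all $t\ge 0$. I expect the only mildly delicate point to be checking that the constants $\Lambda_\alpha(t,y)$, $C_\alpha(t)$, $\|y\|_{\alpha,t,\infty}$ are finite and (as needed) monotone or at least locally bounded in $t$, so that the estimates are uniform on compact time intervals — but this is immediate from their definitions and the fact that $y$ is $\beta$-Hölder with $\alpha+\beta>1$, exactly as recorded in the proof of Proposition~\ref{iterated}.
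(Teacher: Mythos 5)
Your proposal is correct and follows essentially the same route as the paper: bound $\|g_k(t)\|$ by combining hypothesis (\ref{criteria}) with Proposition \ref{iterated} and the count of $(d+1)^k$ words of length $k$, sum the tail via Lemma \ref{bound} with $x=M(d+1)\Lambda_\alpha(t,y)C_\alpha(t)$, and identify the sum with $X_t$ via the preparatory lemma on $T_C(r)$. The only cosmetic differences are that the paper re-derives $T_C(r)>0$ from the same series estimate (with the extra $r^k$ factors and a Beta-function manipulation) instead of citing the Remark, and your displayed bound for $\|g_k(t)\|$ is off by a harmless factor $(d+1)M$ that is absorbed into $K_{\alpha,\gamma,M,d}$ anyway.
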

\begin{proof}
Let us fix $T>0$ and $r >1$. For every $ t\in [0,T]$, by Proposition \ref{iterated} we have:
\begin{align*}
&\sum^{\infty}_{k=1}r^{k}\|g_{k}(t)\|\\
&\leq \sum^{\infty}_{k=1}r^{k}\sum_{|I|=k}\|P_{I}\|\left|\int_{\triangle^{k}[0,t]}dy^{I}\right|\\
&\leq\sum^{\infty}_{k=1}r^{k}(d+1)^{k}M^{k}\Gamma(k\gamma)\Lambda_{\alpha}(T,y)^{k-1}C_{\alpha}(T)^{k-1}\|y\|_{\alpha,T,\infty}
\frac{\Gamma(1-2\alpha)}{\Gamma(k(1-2\alpha))}\\
&\leq\|y\|_{\alpha,T,\infty}r(d+1)M\Gamma(1-2\alpha)\sum^{\infty}_{k=1}\left(r(d+1)C_{\alpha}(T)\Lambda_{\alpha}(T,y)\right)^{k-1}
\frac{\Gamma(k\gamma)}{\Gamma(k(1-2\alpha))}\\
&=\|y\|_{\alpha,T,\infty}r(d+1)M\Gamma(1-2\alpha)\sum^{\infty}_{k=1}
\frac{\left(r(d+1)C_{\alpha}(T)\Lambda_{\alpha}(T,y)\right)^{k-1}}{\Gamma(k(1-2\alpha-\gamma))}B(k\gamma,k(1-2\alpha-\gamma))\\
&\leq B(\gamma,1-2\alpha-\gamma)\|y\|_{\alpha,T,\infty}r(d+1)M\Gamma(1-2\alpha)\sum^{\infty}_{k=1}
\frac{\left(r(d+1)C_{\alpha}(T)\Lambda_{\alpha}(T,y)\right)^{k-1}}{\Gamma(k(1-2\alpha-\gamma))}
\end{align*}
Now note that by our assumption, $1-2\alpha-\gamma>0$, then the series on the right hand side of our inequality converges. This implies that
\[
 T_{C}(r)=\inf\left(t,~\displaystyle \sum^{\infty}_{k=1}r^{k}\|g_{k}(t)\| \geq C\right) >0.
\]
We conclude from Lemma \ref{convergence} that for every $ 0 \le t < T_C(r)$, 
\[
X_t=x_0+\sum^{+\infty}_{k=1}g_{k}(t).
\]
If  moreover $C=+\infty$, that is the radius of convergence of the Taylor expansion of the $V_i$'s is $+\infty$, the previous equality holds for every $t \ge 0$ because, $\lim_{C \to +\infty} T_C(r) =+\infty$.

We then get for $0 \le t <T_C(r)$, thanks to  Lemma \ref{bound}:

\begin{align*}
&\left\|X_t-\left(x_0+\sum^{N}_{k=1}g_{k}(t)\right) \right\|\\
=&\left\|\sum^{\infty}_{k=N+1}g_{k}(t)\right\|\\
\leq& \sum^{\infty}_{k=N+1}\sum_{|I|=k}\|P_{I}\|\left|\int_{\triangle^{k}[0,t]}dy^{I}\right|\\
\leq&\sum^{\infty}_{k=N+1}(d+1)^{k}M^{k}\Gamma(k\gamma)\Lambda_{\alpha}(t,y)^{k-1}C_{\alpha}(T)^{k-1}\|y\|_{\alpha,t,\infty}
\frac{\Gamma(1-2\alpha)}{\Gamma(k(1-2\alpha))} \\
\le & K_{\alpha,\gamma,M,d}  \| y \|_{\alpha,t, \infty}  \frac{(M(d+1)\Lambda_\alpha(t,y) C_\alpha(t) )^N }{\Gamma((1-2\alpha-\gamma)N)} e^{2 (M(d+1)\Lambda_\alpha(t,y) C_\alpha(t) )^{\frac{1}{1-2\alpha-\gamma}}}
\end{align*}
\end{proof}

\subsection{The Lie group example}

In this section, we will see an application of our main results in the context of Lie group. Let us denote by $\bG$ a connected Lie group. For any $g\in \bG$, we have left and right translation $L_g$ and $R_g$, which are diffeomorphisms on $\bG$, defined by $L_{g}(h)=gh$ and $R_{g}(h)=hg$ for any $h\in G$. We say that a vector field $V$ is left-invariant on $\bG$ if for any $g\in \bG$,

\[
(L_g)_{*}(V)=V
\]

Let us denote by $\mathfrak{g}$ the Lie algebra of $\mathbb{G}$, that is the set of left-invariant vector fields on $\bG$ and by $T_{e}\bG$ the tangent space of $\bG$ at identity. We know that the map $V\rightarrow V(e)$ provides a linear isomorphism between $\mathfrak{g}$ and $T_{e}\bG$. Now let $V_{1},\cdots,V_{d} \in \mathfrak{g}$ be left-invariant vector fields and let us consider the following differential equation on Lie group $\bG$,

\begin{align}\label{de lie}
\begin{cases}
   dX_{t}=\displaystyle \sum^{d}_{i=0}V_{i}(X_{t})dy^{i}_{t} \\
   X_0=e
\end{cases}
\end{align}
where $y: \mathbb{R}_{\ge 0}\rightarrow \mathbb{R}^{d+1}$ is a $\beta$-H\"{o}lder path with $\beta>1/2$.  The vector fields are not supposed to be globally Lipschitz, so that we are not exactly in the previous framework. However, the vector fields $V_i$'s are smooth and analytic and the previous equation always admits a solution up to a  possible positive explosion time $\tau$.

\begin{remark}
If  $\bG=GL(n,\mathbb{R})$, we know the associated Lie algebra $\mathfrak{g}=M(n,\mathbb{R})$. In this case, the Taylor expansion associated with (\ref{de lie}) is convergent for every $t \ge 0$. We actually have $\gamma=0$ in the convergence criterion of Theorem \ref{convergence}.
\end{remark}

Before we state our main theorem in that context, let us introduce some notations (see \cite{FBb} and \cite{FB2} for further details).
\begin{itemize}
\item If $I=(i_1,\cdots,i_k)$ we denote by $V_I$ the Lie  commutator defined by
\[
V_I = [V_{i_1},[V_{i_2},...,[V_{i_{k-1}}, V_{i_{k}}]...].
\]

\item $\mathcal{S}_k$ is the set of the permutations of
$\{0,...,k\}$;
\item If $\sigma \in \mathcal{S}_k$, $e(\sigma)$ is the cardinality of the set
\[
\{ j \in \{1,...,k-1 \} , \sigma (j) > \sigma(j+1) \},
\]
\item
\[
\Lambda_I (y)_t= \sum_{\sigma \in \mathcal{S}_k} \frac{\left(
-1\right) ^{e(\sigma )}}{k^{2}\left(
\begin{array}{l}
k-1 \\
e(\sigma )
\end{array}
\right) } \int_{0 \leq t_1 \leq ... \leq t_k \leq t}
dy^{\sigma^{-1}(i_1)}_{t_1}  \cdots
dy^{\sigma^{-1}(i_k)}_{t_k},\quad t \ge 0.
\]
\end{itemize}

The main theorem of this section is the following:

\begin{theorem}
There exists $T>0$ such that for $0 \le t \le T$,
\[
X_t=\exp \left( \sum^{\infty}_{k=1}\sum_{(i_1,...i_k)}\Lambda_{I}(y)_{t}V_{I} \right),
\]
is the solution of the equation \ref{de lie}.
\end{theorem}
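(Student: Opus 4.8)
The plan is to connect the exponential (Chen--Strichartz type) expansion to the ordinary Taylor expansion $x_0 + \sum_k g_k(t)$ whose convergence was established in Theorem \ref{convergence}, and then invoke uniqueness of the solution to the equation (\ref{de lie}). First I would verify that the hypotheses of Theorem \ref{convergence} are met in the Lie group setting, at least locally. Since the $V_i$'s are left-invariant, writing the equation in a fixed coordinate chart around $e$ (say via the exponential map or any faithful local representation), the coefficients become analytic, so there is $C>0$ on which analyticity holds; the iterated-derivative coefficients $P_I = (V_{i_1}\cdots V_{i_k}\pi^j)(e)$ grow at most like $M^{|I|}$ times a factorial-type factor, so the criterion (\ref{criteria}) holds with some $\gamma < 1-2\alpha$ (in fact for $GL(n,\mathbb{R})$ one gets $\gamma=0$ as noted in the Remark). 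Hence by Theorem \ref{convergence} there is $T>0$ with $X_t = x_0 + \sum_{k=1}^\infty g_k(t)$ for $0\le t<T$, and in particular the ordinary Taylor series converges absolutely on $[0,T)$.

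Next I would recall the algebraic identity underlying the Chen--Strichartz formula: the formal series $\sum_{k\ge 1}\sum_{|I|=k}\left(\int_{\triangle^k[0,t]}dy^I\right) V_{i_1}\cdots V_{i_k}$ — which is exactly the series whose action on $\pi^j$ generates $\sum_k g_k(t)$ — equals, as a formal power series in the noncommuting symbols $V_i$, the exponential $\exp\!\left(\sum_{k\ge 1}\sum_{I}\Lambda_I(y)_t V_I\right)$, where $\Lambda_I$ is built from the shuffle/permutation coefficients $e(\sigma)$ displayed above. This is a purely combinatorial fact about the free Lie algebra (logarithm of a group-like element is primitive), valid at the level of formal series; references \cite{FBb}, \cite{FB2} contain the precise statement. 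So on the formal level the exponential expression and the Taylor expansion coincide term by term after grouping homogeneous pieces.

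The remaining work is analytic: I must justify that (i) the inner series $\Sigma(t) := \sum_{k\ge 1}\sum_I \Lambda_I(y)_t V_I$ converges in $\mathfrak{g}$ (or its image in the chart) for $t$ small, (ii) $\exp\Sigma(t)$ is well-defined and lies in $\bG$, and (iii) the rearrangement matching $\exp\Sigma(t)$ with $x_0+\sum_k g_k(t)$ is legitimate. For (i), each $\Lambda_I(y)_t$ is a fixed linear combination of the iterated integrals $\int_{\triangle^k[0,t]}dy^{J}$ with $|J|=k$, and the number of permutations and the combinatorial weights are bounded by $k!$ (indeed the $\Lambda_I$ coefficients are designed so that the relevant sum is dominated); combining Proposition \ref{iterated}'s bound $|\int_{\triangle^k}dy^I|\le \frac{\Gamma(1-2\alpha)}{\Gamma(k(1-2\alpha))}C_\alpha(T)^{k-1}\Lambda_\alpha(T,y)^{k-1}\|y\|_{\alpha,T,\infty}$ with a bound $\|V_I\|\le (2\|V\|)^{k}$ on the iterated Lie brackets, the tail sum is controlled by $\sum_k \frac{c^k}{\Gamma(k(1-2\alpha))}$, which converges; shrinking $T$ if necessary (so that $C_\alpha(T)\Lambda_\alpha(T,y)$ is small, using $C_\alpha(T)\to 0$ as $T\to 0$) makes the whole double series absolutely convergent, giving (i) and (ii). For (iii), one compares partial sums: let $E_N(t)$ be the degree-$\le N$ truncation of $\exp\Sigma(t)$ and $S_N(t)=x_0+\sum_{k\le N}g_k(t)$; the formal identity says the homogeneous components agree, so $E_N - S_N$ collects only terms of degree $> N$, whose norm is bounded by the convergent tail above and hence $\to 0$. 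Since $S_N(t)\to X_t$ by the first paragraph and $E_N(t)\to \exp\Sigma(t)$ by absolute convergence, we conclude $X_t = \exp\Sigma(t)$ for $0\le t\le T$ (after possibly replacing $T$ by a smaller value and noting $T<\tau$), and this is the solution of (\ref{de lie}).

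The main obstacle is bookkeeping the combinatorics of the Chen--Strichartz coefficients $\Lambda_I$ carefully enough to get an honest absolute-convergence estimate for $\Sigma(t)$ and to justify the term-by-term rearrangement — that is, controlling the growth of the permutation sums against the $\frac{1}{\Gamma(k(1-2\alpha))}$ decay from Proposition \ref{iterated}, and confirming the iterated bracket bound $\|V_I\|\lesssim c^{|I|}$ in the chosen chart. Once the absolute convergence of the double series is in hand, everything else (the formal identity, the matching of partial sums, uniqueness of the ODE solution) is routine, so I would concentrate the effort there, likely citing \cite{FBb}, \cite{FB2} for the precise combinatorial identity and the shuffle estimates rather than reproving them.
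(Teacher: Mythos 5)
There is a genuine gap, and it sits exactly where you yourself flag the main difficulty: the absolute convergence of the double series $\Sigma(t)=\sum_{k\ge 1}\sum_{I}\Lambda_I(y)_t V_I$. Each $\Lambda_I(y)_t$ is a sum over all $\sigma\in\mathcal{S}_k$ with weights $\frac{1}{k^2\binom{k-1}{e(\sigma)}}$, and the total mass of these weights is of order $k!/2^k$ (the paper's concluding Remark records precisely the estimate $\sum_\sigma \frac{1}{k^2\binom{k-1}{e(\sigma)}}\le \frac{C}{2^k}k!\sqrt{k}$). Combining this with Proposition \ref{iterated} gives a tail of order $\sum_k \frac{c^k\, k!}{\Gamma(k(1-2\alpha))}$, not $\sum_k \frac{c^k}{\Gamma(k(1-2\alpha))}$ as you claim; since $\Gamma(k(1-2\alpha))$ grows only like $(k!)^{1-2\alpha}$, this diverges for every $\alpha>0$ no matter how small you make $c$ by shrinking $T$. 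The paper explicitly remarks that this route only closes when $\alpha=0$, i.e.\ for absolutely continuous driving paths. Consequently your steps (i) and (iii) — absolute convergence of $\Sigma(t)$ and the term-by-term rearrangement matching $\exp\Sigma(t)$ with the Taylor series — cannot be carried out with the deterministic bounds available here.

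The paper's proof avoids this entirely. It transfers the equation to the Lie algebra via the derivative of the exponential map: setting $U_i(x)=\left(\frac{\mathbf{ad}\,x}{I-e^{-\mathbf{ad}\,x}}\right)(V_i(e))$ on the open set $\mathfrak{g}_e$ where this is defined, one has $(T_x\exp)[U_i(x)]=V_i(\exp x)$, so if $Y$ solves $dY_t=\sum_i U_i(Y_t)\,dy^i_t$, $Y_0=0$, then $X_t=\exp(Y_t)$ solves (\ref{de lie}). The $U_i$ are analytic, so the \emph{soft} Theorem \ref{general} (complex analyticity in the scaling parameter) yields convergence of $\sum_k g_k(t)$ for the $Y$-equation on some $[0,T)$, where $g_k(t)=\sum_{I}\Lambda_I(y)_t V_I$ is the already-grouped homogeneous component (identified via \cite{FBb}, \cite{Str}). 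Convergence of the grouped series comes for free from analyticity; no termwise control over words and permutations is needed. If you want to salvage your outline, you should replace the absolute-convergence argument for $\Sigma(t)$ by this pullback-to-the-Lie-algebra device, or find genuinely finer estimates on the $\Lambda_I$ than the crude permutation count provides.
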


\begin{proof}
Let us denote by $\mathbf{ad}$ the map $\left(\mathbf{ad}x\right)(z)=[x,z]$. Consider the set $\mathfrak{g}_{e}=\{x\in T_{e}G~~,~~s(x)\cap2\pi K\subset\{0\}\}$, where $s(x)$ denotes the spectrum of $\mathbf{ad}x$. It is known that $\mathfrak{g}_{e}$ is an open set of $T_{e}G$ and $\forall x\in \mathfrak{g}_{e}$, $\displaystyle\left( \frac{I-e^{-\mathbf{ad}x}}{\mathbf{ad}x}\right)$ is invertible, where $\displaystyle \left(\frac{I-e^{-\mathbf{ad}x}}{\mathbf{ad}x}\right)$ is defined as power series $\displaystyle\sum^{\infty}_{k=0}\frac{1}{\left(k+1\right)!}\left(\mathbf{-ad}x\right)^{k}$. Now we define vector fields $U_{i}$ on $\mathfrak{g}_{e}$ by $\displaystyle U_{i}(x)=\left(\frac{\mathbf{ad}x}{I-e^{-\mathbf{ad}x}}\right)(V_{i}(e))$ where $\displaystyle\left( \frac{\mathbf{ad}x}{I-e^{-\mathbf{ad}x}}\right)$ denotes the inverse of $\displaystyle\left(\frac{I-e^{-\mathbf{ad}x}}{\mathbf{ad}x}\right)$\\

 We claim that for $\forall x \in \mathfrak{g}_{e}$ we have:
\[
(T_{x}exp)[U_{i}(x)]=V_{i}(expx)
\]
 In fact, since $V_{i}'s$ are left-invariant vector field on $\bG$, we have

\begin{align*}
\left(T_{x}exp\right)[U_{i}(x)]=&(L(expx))_{*}\circ \left(\frac{I-e^{-\mathbf{ad}x}}{\mathbf{ad}x}\right)[U_{i}(x)]\\
                    =&(L(expx))_{*}\circ \left(\frac{I-e^{-\mathbf{ad}x}}{\mathbf{ad}x}\right)\left(\frac{\mathbf{ad}x}{I-e^{-\mathbf{ad}x}}\right)(V_{i}(e))\\
                    =&\left(L(expx)\right)_{*}(V_{i}(e))\\
                    =&V_{i}(expx)
\end{align*}

Now if we consider the differential equation on $\mathfrak{g}_{e}$:
\begin{align}
\begin{cases}\label{de4}
   dY_{t}=\displaystyle \sum^{d}_{i=1}U_{i}(Y_{t})dy^{i}_{t} \\
   Y_0=0
\end{cases}
\end{align}

we know that $U_{i}(x)$ are analytic on $\mathfrak{g}_{e}$. Therefore from Theorem \ref{general}, there exists a positive time $T$ such that for $0\le t <T$,
\[
Y_t= \sum_{k=1}^\infty g_k(t),
\]
where   $\sum_{k=1}^\infty g_k(t)$ is the Taylor expansion of the equation defining $Y_t$. It is proved as in \cite{FBb} and \cite{Str} that
\[
g_k(t)=\sum_{(i_1,...i_k)}\Lambda_{I}(y)_{t}V_{I} .
\]

Now we claim that by the chain rule, we have $X_t=exp(Y_t)$ when $t<T$. In fact:
\begin{align*}
dX_t=&(T_{Y_{t}}exp)dY_{t}\\
    =&(T_{Y_{t}}exp)[\sum^{d}_{i=1}U_{i}(Y_{t})]dy^{i}_{t}\\
    =&\sum^{d}_{i=0}T_{Y_{t}}exp[U_{i}(Y_t)]dy^{i}_{t}\\
    =&\sum^{d}_{i=0}V_{i}(exp(Y_t))dy^{i}_{t}\\
    =&\sum^{d}_{i=0}V_{i}(X_t)dy^{i}_{t}
\end{align*}

with $X_{0}=exp(Y_{0})=e$

\end{proof}

\begin{remark}
It seems difficult to obtain precise universal bounds on the radius of convergence of $\sum^{\infty}_{k=1}\sum_{(i_1,...i_k)}\Lambda_{I}(y)_{t}V_{I}$ as it is the case for the radius of convergence of the Magnus expansion of absolutely continuous paths. Indeed, rough estimates lead to 
\[
 \sum_{\sigma \in \mathcal{S}_k} \frac{1}{k^{2}\left(
\begin{array}{l}
k-1 \\
e(\sigma )
\end{array}
\right) } \le \frac{C}{2^k} k! \sqrt{k},
\]
for some constant $C>0$.
We get therefore the bound
\[
\left| \Lambda_I (y)_t \right| \le \frac{C}{2^k} k! \sqrt{k} \frac{\Gamma(1-2\alpha)}{\Gamma(k(1-2\alpha)) } C_\alpha(t)^{k-1} \Lambda_\alpha (t, y)^{k-1} \| y \|_{\alpha,\infty},
\]
which unfortunately gives a convergence for the series for $\alpha=0$, which corresponds to the case where $y$ is absolutely continuous.
\end{remark}

\section{Taylor expansion for stochastic differential equations driven by fractional Brownian motions}

We now turn to the second part of this work and applies our result to stochastic differential equations driven by factional Brownian motion paths.

Let us recall that a $d$-dimensional fractional Brownian motion with Hurst parameter
$H \in (0,1)$ is a Gaussian process
\[
B_t = (B_t^1,\cdots,B_t^d), \text{ } t \geq 0,
\]
where $B^1,\cdots,B^d$ are $d$ independent centered Gaussian
processes with covariance function
\[
R\left( t,s\right) =\frac{1}{2}\left(
s^{2H}+t^{2H}-|t-s|^{2H}\right).
\]
It can be shown that such a process admits a continuous version
whose paths have finite $p$ variation for $1/p<H$. Let us observe
that for $H= \frac{1}{2}$, $B$ is a standard Brownian motion.

In this part, we will consider the following stochastic differential equation:
\begin{align} \label{sde}
\begin{cases}
   dX_{t}=\displaystyle \sum^{d}_{i=0}V_{i}\left(X_{t}\right)dB^{i}_{t}\\
   X_0=x_0
\end{cases}
\end{align}
where the $V_i$'s are Lipschitz $C^{\infty}$ vector fields on $\mathbb{R}^n$ with Lipschitz derivatives and $B_t$ is a d-dimensional fractional Brownian motion with Hurst parameter $H>1/2$ with the convention that $B_0=t$.

Let us remember that when $H>1/2$, $B_t$ is almost surely H\"{o}lder path with H\"{o}lder exponent strictly less than H. In particular, we can pick $1/2<\beta<H$ such that $B_t$ is $\beta$ H\"{o}lder path. Hence, we can define the Stochastic Taylor expansion associated with (\ref{sde}) just as in the deterministic case by $$\displaystyle x_0+\sum^{\infty}_{k=1}g_{k}(t),$$ where $\displaystyle g^{j}_{k}(t)=\sum_{|I|=k}P^{j}_{I}\int_{\triangle^{k}[0,t]}dB^{I}, \quad P^{j}_{I}=(V_{i_1}\cdots  V_{i_k}\pi^{j})(x_0)$ and $\int_{\triangle^{k}[0,t]}dB^{I}=\displaystyle
    \int_{0<t_1<t_2\cdots<t_k<t}dB^{i_1}_{t_1}\cdots dB^{i_k}_{t_k}$.

\bigskip

As in the previous section, we assume throughout this section assume that the $V_i$'s are Lipschitz $C^{\infty}$ vector fields on $\mathbb{R}^n$ with Lipschitz derivatives and analytic on the set $\{x:  \|x-x_0\| \leq C\}$ for some $C>0$.

Now let us state the following convergence theorem for solution to the stochastic differential equation (\ref{sde}), which is an immediate consequence of  Theorem \ref{general}.

\begin{theorem}
There exists an almost surely positive stopping time $T$, such that for $0\le t <T$ the series
\[
\| x_0 \| +\sum_{i=1}^\infty \| g_k(t) \|
\]
almost surely converges and almost surely
\[
X_t=x_0+\sum_{k=1}^{+\infty} g_k(t).
\]
\end{theorem}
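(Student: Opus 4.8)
The plan is to deduce this statement as a direct pathwise consequence of Theorem \ref{general}, applied $\omega$ by $\omega$. Since $H>1/2$, almost every sample path $t\mapsto B_t(\omega)$ is $\beta$-H\"older continuous for some fixed $\beta\in(1/2,H)$; fix such a $\beta$ once and for all, and let $\Omega_0$ be the event of full probability on which this holds. On $\Omega_0$, for each fixed $\omega$ the path $y=B_\cdot(\omega)$ satisfies exactly the hypotheses imposed on the driving signal in Section 2 (together with the convention $B^0_t=t$, which is trivially H\"older), and the vector fields $V_i$ are Lipschitz $C^\infty$ with Lipschitz derivatives and analytic on $\{x:\|x-x_0\|\le C\}$ as assumed throughout this section. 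Hence the deterministic differential equation (\ref{dep3}) with this $y$ has a unique solution, and its Taylor expansion $x_0+\sum_{k\ge 1}g_k(t)$ is precisely the stochastic Taylor expansion defined above with $y$ replaced by $B(\omega)$.

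First I would invoke Theorem \ref{general}: for each $\omega\in\Omega_0$ it produces a time $T(\omega)>0$ such that for $0\le t<T(\omega)$ the series $\sum_{k\ge 1}\|g_k(t)\|$ converges and $X_t=x_0+\sum_{k\ge 1}g_k(t)$. Setting $T(\omega)=0$ off $\Omega_0$, this gives a map $\omega\mapsto T(\omega)$ which is almost surely positive and for which the stated convergence and identity hold on $\{0\le t<T\}$. Adding $\|x_0\|$ to a convergent series of nonnegative terms keeps it convergent, which handles the precise form $\|x_0\|+\sum_{k\ge 1}\|g_k(t)\|$ stated in the theorem.

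The one genuine technical point — and the part I expect to require the most care — is the measurability of $T$, i.e. checking that it is a stopping time rather than merely an almost surely positive random variable. The clean way to handle this is to use the quantitative description: by the argument in the proof of Theorem \ref{general} (or more explicitly via Lemma \ref{convergence} and the bound in Theorem \ref{convergence}), one can take $T$ to be a time determined by the size of the path, for instance $T = T_C(r)$ for some fixed $r>1$, where $T_C(r)=\inf\{t:\sum_{k\ge 1}r^k\|g_k(t)\|\ge C\}$ is the first hitting time of $C$ by a process adapted to the natural filtration of $B$ (each $g_k(t)$ is built from iterated integrals of $B$ over $[0,t]$, hence $\mathcal F_t$-measurable, and the partial sums are continuous and monotone in the relevant sense). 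As the first hitting time of a closed set by an adapted continuous process, $T_C(r)$ is a stopping time, and Theorem \ref{general}'s conclusion — equivalently the combination of Lemma \ref{convergence} with the remark that $T_C(r)>0$ almost surely — gives both the almost sure positivity and the identity $X_t=x_0+\sum_{k\ge1}g_k(t)$ on $\{t<T_C(r)\}$. I would therefore phrase the proof as: fix $r>1$, set $T=T_C(r)$, note $T$ is a stopping time, apply Theorem \ref{general} pathwise to get $T>0$ a.s., and conclude.
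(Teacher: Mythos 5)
Your proposal is correct and follows exactly the route the paper takes: the paper offers no proof at all for this statement, simply declaring it ``an immediate consequence of Theorem \ref{general}'' applied pathwise to the almost surely $\beta$-H\"older sample paths of $B$ for a fixed $\beta\in(1/2,H)$, which is precisely your first paragraph. Your additional care about realizing $T$ as the hitting time $T_C(r)$ so that it is genuinely a stopping time addresses a point the paper silently glosses over, and is a welcome supplement rather than a deviation.
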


\subsection{Quantitative deterministic bounds}

In this part, we will derive a convergence criterion for the stochastic Taylor expansion associated with the SDE (\ref{sde}). The next theorem is just an direct application of the deterministic theorem \ref{convergence}.

\begin{theorem}
Let $\alpha$ such that $1-H <\alpha <\frac{1}{2}$. Let us assume that there exist $M>0$ and $0<\gamma<1-2\alpha$ such that for every word $I \in \{ 0,\cdots, d\}^k$
\begin{align}\label{criteria}
\|P_{I}\|\leq \Gamma(\gamma|I|)M^{|I|}.
\end{align}
Then, for every $r >1$, $T_C(r)>0$ almost surely, and there exists a constant $ K_{\alpha,\gamma,M,d} >0 $ depending only on the subscript variables such that for every $0 \le  t <T_C(r)$ and $N \ge 1$,
\begin{align}\label{CB1}
\left\|X_t-\left(x_0+\sum^{N}_{k=1}g_{k}(t)\right) \right\|\leq K_{\alpha,\gamma,M,d}  \| B \|_{\alpha,t, \infty}  \frac{(M(d+1)\Lambda_\alpha(t,B) C_\alpha(t) )^N }{\Gamma((1-2\alpha-\gamma)N)} e^{2 (M(d+1)\Lambda_\alpha(t,y) C_\alpha(t) )^{\frac{1}{1-2\alpha-\gamma}}}
\end{align}
In particular for almost  every $0 \le  t <T_C(r)$, we have almost surely
\begin{align}\label{CB2}
X_t=x_0+\sum^{+\infty}_{k=1}g_{k}(t).
\end{align}
Moreover if $C=+\infty$, that is if the $V_i$'s are analytic on $\mathbb{R}^n$, then (\ref{CB1}) and (\ref{CB2}) hold for every $t \ge 0$.
\end{theorem}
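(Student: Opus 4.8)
The plan is to deduce the statement pathwise from the deterministic Theorem \ref{convergence}, the only genuinely probabilistic ingredient being the H\"older regularity of the sample paths of $B$.

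First I would fix the exponents. Given $\alpha$ with $1-H<\alpha<\frac12$, note that $1-\alpha<H$ (because $\alpha>1-H$) and $1-\alpha>\frac12$ (because $\alpha<\frac12$), so the interval $(1-\alpha,H)$ is nonempty and contained in $(\frac12,1)$; I fix $\beta$ in this interval, so that simultaneously $\frac12<\beta<H$ and $1-\beta<\alpha<\frac12$. By the Kolmogorov continuity criterion applied to $\E{|B^i_t-B^i_s|^{2p}}=c_p\,(\var{B^i_t-B^i_s})^p=c_p|t-s|^{2pH}$ with $p$ large, there is a full-measure event $\Omega_0$ on which every path $t\mapsto B_t(\omega)$ is $\beta$-H\"older continuous on each compact interval. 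On $\Omega_0$ the path $y=B(\omega)$ then satisfies the standing assumption on the driving signal imposed below (\ref{dep3}), while the vector fields $V_i$ satisfy the analyticity and Lipschitz assumptions together with the growth condition (\ref{criteria}) by hypothesis, uniformly in $\omega$.

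Next I would simply invoke Theorem \ref{convergence} for each frozen $\omega\in\Omega_0$, with $y=B(\omega)$. This yields, for every $r>1$, that $T_C(r)(\omega)>0$, that the quantitative estimate (\ref{CB1}) holds for all $0\le t<T_C(r)(\omega)$ and all $N\ge1$ (with $\Lambda_\alpha(t,B(\omega))$, $C_\alpha(t)$ and $\|B(\omega)\|_{\alpha,t,\infty}$ in place of the corresponding deterministic objects), and that the series identity (\ref{CB2}) holds for all $0\le t<T_C(r)(\omega)$. Since $\Omega_0$ is a single full-measure event not depending on $t$, these conclusions hold almost surely, simultaneously for all admissible $t$ and $N$; in particular the weaker Fubini-type assertion that for almost every $t<T_C(r)$ the identity (\ref{CB2}) holds almost surely follows a fortiori. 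For the case $C=+\infty$ I would repeat the deterministic argument verbatim: on $\Omega_0$ one has $T_C(r)(\omega)\uparrow+\infty$ as $C\uparrow+\infty$, so (\ref{CB1}) and (\ref{CB2}) extend to every $t\ge0$.

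I do not expect a real obstacle here; the loose ends are purely bookkeeping. One should note that $T_C(r)$ is a random variable — it is the hitting time of $[C,+\infty)$ by the continuous adapted process $t\mapsto\sum_{k\ge1}r^k\|g_k(t)\|$, hence a stopping time — and that $\Lambda_\alpha(t,B)$ and $\|B\|_{\alpha,t,\infty}$ are measurable functionals of the continuous path, so every quantity entering (\ref{CB1}) is well defined; one should also check that the $W^{1-\alpha,\infty}$ regularity required by Proposition \ref{iterated} is available on $\Omega_0$, which is exactly what the choice $1-\beta<\alpha$ secures. All of the analytic content — the fractional-calculus bounds on iterated integrals of Proposition \ref{iterated}, the factorial/Gamma series estimate of Lemma \ref{bound}, and the identification of the limit with the solution via the fixed-point argument — is already packaged in Theorem \ref{convergence}, so no new estimate needs to be proved.
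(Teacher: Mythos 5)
Your proposal is correct and follows essentially the same route as the paper, which states only that the theorem ``is just a direct application of the deterministic Theorem \ref{convergence}'' applied pathwise, the sole probabilistic input being the almost sure local $\beta$-H\"older regularity of the fractional Brownian paths for $\beta<H$ (the paper invokes Lemma 7.4 of Nualart--Rascanu where you invoke Kolmogorov's criterion). Your careful choice of $\beta\in(1-\alpha,H)$ and the remarks on measurability of $T_C(r)$ and of $\Lambda_\alpha(t,B)$, $\|B\|_{\alpha,t,\infty}$ merely make explicit the bookkeeping the paper leaves implicit.
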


We can observe that the error estimate provided by the previous theorem is in $L^p$ for every $p \ge 1$. Indeed, Lemma 7.4 in Nualart-Rascanu (\cite{Nualart}) says for any $\beta < H$ and $T>0$ there exists a positive random variable $\eta_{\beta,T}$ such that $E(|\eta_{\beta,T}|^{p})<\infty$ for all $p \ge1$ and for all $s,t\in [0,T]$

\[
\|B(t)-B(s)\|\leq \eta_{\beta,T}|t-s|^{\beta} \quad a.s.
\]

We now have,
\begin{align*}
\|B\|_{\alpha,T,\infty}=&\sup_{0\leq t\leq T}\left(\|B_t\|+\int^{t}_{0}\frac{\|B_t-B_s\|}{(t-s)^{1+\alpha}}ds\right)\\
                     \leq &\sup_{0\leq t\leq T}\left(\eta_{\beta,T}t^{\beta}+
                     \int^{t}_{0}\eta_{\beta,T}(t-s)^{\beta-1-\alpha}ds\right)\\
                     \leq &\left(T^{\beta}+\frac{1}{\beta-\alpha}T^{\beta-\alpha}\right) \eta_{\beta,T}
\end{align*}
So, $\mathbb{E}\|B\|^{p}_{\alpha,T,\infty}<\infty$ follows from the fact that  $E(|\eta_{\beta,T}|^{p})<\infty$ for all $p\geq 1$ and for all $s,t\in [0,T]$.\\
As for $\Lambda_{\alpha}(T,B)$, we have the following estimate
\begin{align*}
\Lambda_{\alpha}(T,B)=&\frac{1}{\Gamma(1-\alpha)}\sup_{0\leq s\leq t \leq T}\|D^{1-\alpha}_{t-}B_{t-}(s)\|\\
                   \leq & \frac{1}{\Gamma(1-\alpha)\Gamma(\alpha)}\sup_{0\leq s\leq t \leq T}\left(\frac{\|B_t-B_s\|}{(t-s)^{1-\alpha}}+
                   (1-\alpha)\int^{t}_{s}\frac{\|B_u-B_s\|}{(u-s)^{2-\alpha}}du\right)\\
                   \leq & \frac{1}{\Gamma(1-\alpha)\Gamma(\alpha)}\sup_{0\leq s\leq t \leq T}\left(\eta_{\beta,T}(t-s)^{\beta+\alpha-1}+(1-\alpha)\eta_{\beta,T}\int^{t}_{s}(u-s)^{\beta+\alpha-2}du\right)\\
                   =&\frac{\eta_{\beta,T}}{\Gamma(1-\alpha)\Gamma(\alpha)}\sup_{0\leq s\leq t \leq T}\left((t-s)^{\beta+\alpha-1}+\frac{1-\alpha}{\alpha+\beta-1}(t-s)^{\alpha+\beta-1}\right)\\
                   \leq & \left(\frac{\beta T^{\alpha+\beta-1}}{(\alpha+\beta-1)\Gamma(1-\alpha)\Gamma(\alpha)}\right)\eta_{\beta,T}
\end{align*}
Note that by assumption we know $\alpha+\beta-1>0$ and  $\mathbb{E}|\Lambda_{\alpha}(B)|^{p}<\infty$ follows by the fact that $\eta_{\beta,T}$ has a Gaussian tail from Fernique's theorem.

\begin{remark}
Note that in this argument we didn't include the drift term  when we showed the $L^p$ integrability. However, the argument can be modified trivially if we include drift term, since the fractional derivative of $t$ with order $1-\alpha>0$ is bounded in any closed interval $[0,T]$
\end{remark}

\subsection{Quantitative probabilistic bounds, $L^2$ error estimate}

In this part, we will develop a convergence criterion for the stochastic Taylor expansion which is based on the $L_2$ estimates of the iterated integrals of fractional Brownian motion.  Such estimates may be found in  Proposition 4.8 in \cite{Tindel}. They rely on the computation of $\mathbb{E}\left(\int_{\triangle^{m}[0,t]}dB^{I}\right)$ established in \cite{FB1} and the isometry property of the divergence operator.

Compared with the deterministic approach, the probabilistic method improves the speed   of convergence  in $L^2$ when $H \in \left( \frac{1}{2}, \frac{3}{4} \right)$.

\

For convenience in this section we restrict ourselves to the case where there is no drift, that is we assume $V_0=0$.  By using heavier notations, the arguments  we use may be extended when $V_0\neq 0$.

\

Let us first remind the result by Neuenkirch-Nourdin-Tindel in \cite{Tindel}.
\begin{proposition}
Assume that $H>1/2$ and let $\alpha_{H}=H(2H-1)$ and $K=\sqrt{\frac{2}{\alpha_{H}}}$ we have the following inequality:

\[
 \mathbb{E}\left(\left|\int_{\triangle^{m}[0,t]}dB^{I}\right|^2\right) \leq \frac{K^{2m}}{m!} t^{2H |I|}.
\]
\end{proposition}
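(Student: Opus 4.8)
The statement to be proved is the $L^2$ bound
\[
\mathbb{E}\left(\left|\int_{\triangle^{m}[0,t]}dB^{I}\right|^2\right) \leq \frac{K^{2m}}{m!} t^{2H|I|},
\]
with $\alpha_H = H(2H-1)$ and $K = \sqrt{2/\alpha_H}$, where $I$ is a word of length $m$ in $\{1,\dots,d\}$ (recall we are in the no-drift setting $V_0=0$, so no index equals $0$). Since this is attributed to Neuenkirch-Nourdin-Tindel, the plan is essentially to recall the computation of the second moment of the iterated integral in terms of the covariance structure of fractional Brownian motion and then iterate a one-step estimate.

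First I would write $\int_{\triangle^m[0,t]}dB^I$ as an iterated integral $J_m(t)$ defined recursively by $J_0 \equiv 1$ and $J_k(t) = \int_0^t J_{k-1}(s)\,dB^{i_k}_s$, these being Young (pathwise) integrals since $H>1/2$; on the Wiener space they coincide with Skorokhod/divergence integrals because each $J_{k-1}$ is adapted and the integrator has $H>1/2$. The key analytic input is the identity, established in Baudoin-Coutin \cite{FB1}, for the expectation of a product of two such iterated integrals: using the fact that for $H>1/2$ the covariance $\mathbb{E}(dB^i_s\, dB^j_u) = \delta_{ij}\,\alpha_H |s-u|^{2H-2}\,ds\,du$, one gets that $\mathbb{E}(|J_m(t)|^2)$ equals a sum over pairings of the $m$ increments, but because the $i_k$ are fixed, only the "planar/nested" structure survives in a clean bound. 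Concretely, I would derive the one-step inequality
\[
\mathbb{E}\left(|J_k(t)|^2\right) = \alpha_H \int_0^t\!\!\int_0^t \mathbb{E}\big(J_{k-1}(s)J_{k-1}(u)\big)\, |s-u|^{2H-2}\,ds\,du,
\]
then bound $\mathbb{E}(J_{k-1}(s)J_{k-1}(u)) \le \big(\mathbb{E}|J_{k-1}(s)|^2\big)^{1/2}\big(\mathbb{E}|J_{k-1}(u)|^2\big)^{1/2}$ by Cauchy-Schwarz, and feed in the induction hypothesis $\mathbb{E}|J_{k-1}(s)|^2 \le \frac{K^{2(k-1)}}{(k-1)!} s^{2H(k-1)}$.

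The induction step then reduces to a deterministic double integral: with $f(s) = s^{H(k-1)}$ one needs
\[
\alpha_H \int_0^t\!\!\int_0^t s^{H(k-1)} u^{H(k-1)} |s-u|^{2H-2}\, ds\, du \le \frac{2}{k}\, t^{2Hk}.
\]
Substituting $s = t\sigma$, $u = t\tau$ pulls out $t^{2Hk}$ and leaves $\alpha_H \int_0^1\int_0^1 (\sigma\tau)^{H(k-1)}|\sigma-\tau|^{2H-2}\,d\sigma\,d\tau$; by symmetry this is $2\alpha_H\int_0^1\int_0^\sigma (\sigma\tau)^{H(k-1)}(\sigma-\tau)^{2H-2}\,d\tau\,d\sigma$, and the inner integral is a Beta integral after $\tau = \sigma v$, giving $\sigma^{2Hk-1}B(H(k-1)+1, 2H-1)$; the outer integral contributes $1/(2Hk)$, so the whole thing equals $\frac{\alpha_H}{Hk}B(H(k-1)+1,2H-1)$, and one checks $\frac{\alpha_H}{Hk}B(H(k-1)+1,2H-1) \le \frac{2}{k}$ using $B(H(k-1)+1,2H-1) \le B(1,2H-1) = \frac{1}{2H-1}$ and $\alpha_H = H(2H-1)$, which makes the left side exactly $\frac{1}{k}\le \frac{2}{k}$. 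Multiplying the accumulated constant $\frac{K^{2(k-1)}}{(k-1)!}$ by $K^2 = \frac{2}{\alpha_H}$ and by the gain $\frac{1}{k}$ (or rather absorbing $\alpha_H$ times the Beta factor into one power of $K^2$) gives $\frac{K^{2k}}{k!}$, closing the induction; the base case $k=0$ is trivial since $J_0\equiv 1$.

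The main obstacle is making the first step — the exact expression for $\mathbb{E}(J_{k-1}(s)J_{k-1}(u))$ and the passage to Young integrals — fully rigorous: one must justify that the pathwise iterated integral is Skorokhod-integrable with the stated covariance identity (this is where \cite{FB1} and the isometry of the divergence operator enter) and that the Cauchy-Schwarz step does not lose the factorial. If one prefers to avoid Malliavin calculus entirely, an alternative is to cite Proposition 4.8 of \cite{Tindel} directly, since the statement here is quoted verbatim from there; in that case the "proof" is simply the reduction above plus a reference, and the only thing to verify independently is the constant bookkeeping in the Beta-function estimate.
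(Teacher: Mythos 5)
The paper does not prove this proposition at all: it is quoted verbatim as Proposition 4.8 of Neuenkirch--Nourdin--R\"o{\ss}ler--Tindel \cite{Tindel}, with only the remark that the proof there rests on the computation of $\mathbb{E}\left(\int_{\triangle^{m}[0,t]}dB^{I}\right)$ from \cite{FB1} and the isometry of the divergence operator. So your attempt to give an actual inductive proof is going beyond what the paper does, and the question is whether it closes.

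It does not, and the gap is exactly where you flag it. Your one-step relation
\[
\mathbb{E}\left(|J_k(t)|^2\right) = \alpha_H \int_0^t\!\!\int_0^t \mathbb{E}\bigl(J_{k-1}(s)J_{k-1}(u)\bigr)\, |s-u|^{2H-2}\,ds\,du
\]
is asserted as an identity, but it is false for a random integrand. First, for $H>1/2$ the pathwise (Young) integral $\int_0^t J_{k-1}(s)\,dB^{i_k}_s$ is \emph{not} the divergence $\delta(J_{k-1}\mathbf{1}_{[0,t]})$; they differ by a trace term involving the Malliavin derivative of $J_{k-1}$ (your appeal to ``adaptedness'' does not help here, since fractional Brownian motion is not a martingale and adapted Skorokhod integrals do not reduce to It\^o integrals). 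Second, even for the divergence itself, the isometry reads $\mathbb{E}(\delta(u)^2)=\mathbb{E}\langle u,u\rangle_{\mathcal H}+\mathbb{E}\langle Du,(Du)^*\rangle_{\mathcal H\otimes\mathcal H}$, so there is an additional second-order term. Both corrections are omitted from your recursion, and since they need not be negative, your induction only controls part of $\mathbb{E}|J_k(t)|^2$ and therefore does not establish the stated upper bound. The rest of your bookkeeping is fine: granting the (false) identity, the scaling plus Beta-function computation gives the one-step factor $\frac{\alpha_H}{Hk}B(H(k-1)+1,2H-1)\le \frac1k$, which would even yield $K^{2(k-1)}/k!$ rather than $K^{2k}/k!$ --- and that spare factor $K^2=2/\alpha_H$ per step is precisely the slack the genuine proof needs to absorb the correction terms you dropped. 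To make the argument rigorous you must either carry those terms through the induction (which is essentially what \cite{Tindel} does, using the explicit pairing formula of \cite{FB1} for products of iterated integrals), or simply cite Proposition 4.8 of \cite{Tindel} as the paper does.
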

It is interesting to observe that this decay in $m$ is faster than the decay obtained by the deterministic method when $H \in \left( \frac{1}{2}, \frac{3}{4} \right)$.
We now prove:

\begin{theorem}
Let us assume that there exists $M>0$ and $0\le \gamma <1/2$ such that $\|P_I\|\leq M^{|I|}(|I|!)^{\gamma}$.  Then for every $r>1$, $T_C(r)>0$ almost surely and for $0\le t <T_C(r)$, we have in $L^2$ and  almost surely
\[
X_t=x_0+\sum^{+\infty}_{k=1}g_{k}(t).
\]
Moreover there exists a constant $C_\gamma >0$ such that,
\[
\mathbb{E} \left( \left\|X_t-\left(x_0+\sum^{N}_{k=1}g_{k}(t)\right) \right\|^2 , t <T_C(r) \right)^{1/2} \leq C_\gamma  \frac{ (dM Kt^{2H})^{N+1}}{((N+1)!)^{1/2-\gamma}} \Phi_\gamma ( dK M t^{2H}),
\]
where
\[
\Phi_\gamma (x)= \sum_{k=0}^{+\infty} \frac{x^k}{(k!)^{1/2-\gamma}}.
\]
Moreover if $C=+\infty$, that is if the $V_i$'s are analytic on $\mathbb{R}^n$, then we may take $T_C(r)=+\infty$ in the above inequality.
\end{theorem}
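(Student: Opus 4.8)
The plan is to mimic the structure of the proof of Theorem \ref{convergence}, but replacing the deterministic bound of Proposition \ref{iterated} with the $L^2$ bound on iterated integrals in the proposition of Neuenkirch-Nourdin-Tindel. First I would fix $r>1$ and $T>0$ and estimate, using the triangle inequality in $L^2(\Omega)$ and the hypothesis $\|P_I\| \le M^{|I|}(|I|!)^\gamma$, the quantity $\sum_{k=1}^\infty r^k \mathbb{E}(\|g_k(t)\|^2)^{1/2}$. Since there is no drift, there are $d^k$ words $I$ of length $k$, so
\[
\mathbb{E}\left( \| g_k(t) \|^2 \right)^{1/2} \le \sum_{|I|=k} \| P_I \| \, \mathbb{E}\left( \left| \int_{\triangle^k[0,t]} dB^I \right|^2 \right)^{1/2} \le d^k M^k (k!)^\gamma \frac{K^k}{(k!)^{1/2}} t^{2Hk} = (d M K t^{2H})^k (k!)^{\gamma - 1/2}.
\]
Because $\gamma < 1/2$, the series $\sum_k r^k (dMKt^{2H})^k (k!)^{\gamma-1/2}$ converges for every $t$ and $r$; in particular its sum is a continuous increasing function of $t$ vanishing at $t=0$, so $T_C(r) = \inf\{ t : \sum_k r^k \|g_k(t)\| \ge C \} > 0$ almost surely (the $L^2$ convergence forces a.s. finiteness of $\sum_k r^k \|g_k(t)\|$ on a dense set of $t$, and one upgrades to all $t$ by monotonicity/continuity as in the Remark following Lemma \ref{convergence}). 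Lemma \ref{convergence} then yields $X_t = x_0 + \sum_{k=1}^\infty g_k(t)$ for $0 \le t < T_C(r)$, both a.s. and — by dominated convergence using the $\ell^2$-summable majorant — in $L^2$.

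For the quantitative remainder estimate, I would simply sum the tail: for $N \ge 0$,
\[
\mathbb{E}\left( \left\| X_t - \Big( x_0 + \sum_{k=1}^N g_k(t) \Big) \right\|^2 , \, t < T_C(r) \right)^{1/2} \le \sum_{k=N+1}^\infty \mathbb{E}\left( \| g_k(t) \|^2 \right)^{1/2} \le \sum_{k=N+1}^\infty (dMKt^{2H})^k (k!)^{\gamma-1/2}.
\]
Reindexing $k = N+1+j$ and using $((N+1+j)!)^{1/2-\gamma} \ge ((N+1)!)^{1/2-\gamma} (j!)^{1/2-\gamma}$ (valid since $(N+1+j)! \ge (N+1)! \, j!$ and $1/2 - \gamma > 0$), one factors out $(dMKt^{2H})^{N+1} ((N+1)!)^{\gamma - 1/2}$ and is left with $\sum_{j \ge 0} (dMKt^{2H})^j (j!)^{\gamma - 1/2} = \Phi_\gamma(dKMt^{2H})$, which gives exactly the asserted bound with $C_\gamma = 1$ (so any $C_\gamma \ge 1$ works). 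The case $C = +\infty$ follows as in Theorem \ref{convergence}: the estimate holds on $[0,t]$ for $t < T_C(r)$ and $T_C(r) \to +\infty$ as $C \to +\infty$ since the bounding series converges for all $t$.

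The only genuinely delicate point is the passage from "$L^2$ convergence of $\sum g_k(t)$ for fixed $t$" to the almost sure identity $X_t = x_0 + \sum g_k(t)$ holding simultaneously with the stopping-time definition; here one must be a little careful that the event $\{t < T_C(r)\}$ is exactly where Lemma \ref{convergence} applies pathwise, and that the $L^2$ bound is compatible with truncating on this event — which is why the theorem states the $L^2$ error with the indicator $t < T_C(r)$ inside the expectation. Everything else is a routine repetition of the argument already carried out in the deterministic Theorem \ref{convergence}, with the combinatorial gain coming from the factor $(k!)^{-1/2}$ in the $L^2$ bound, which beats the $\Gamma(k(1-2\alpha))^{-1}$ deterministic decay precisely when $H < 3/4$ (so that $2H < 3/2$, matching the exponent relationship). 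I expect the main obstacle to be bookkeeping the stopping time rather than any analytic difficulty.
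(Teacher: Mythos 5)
Your proposal is correct and rests on the same two pillars as the paper's proof: the Neuenkirch--Nourdin--R\"o{\ss}ler--Tindel $L^2$ bound $\mathbb{E}\left(\left|\int_{\triangle^{k}[0,t]}dB^{I}\right|^2\right)^{1/2}\leq \frac{K^{k}}{\sqrt{k!}}t^{2Hk}$ combined with the hypothesis $\|P_I\|\leq M^{|I|}(|I|!)^{\gamma}$ and the count of $d^k$ words, followed by summation of the tail $\sum_{k>N}(dMKt^{2H})^k(k!)^{\gamma-1/2}$; your factorization via $(N+1+j)!\geq (N+1)!\,j!$ is a clean justification of the final bound that the paper states without detail. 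The one genuine divergence is the route to almost sure convergence: the paper applies Chebyshev's inequality to $\mathbb{P}\left(\sum_{|I|=m}\left\|P_{I}\int_{\triangle^{m}[0,t]}dB^{I}\right\|\geq \rho^m\right)$, shows these probabilities are summable, and invokes Borel--Cantelli (this is the ``Borel--Cantelli type argument'' advertised in the abstract), whereas you deduce $\sum_m r^m\|g_m(t)\|<\infty$ a.s.\ directly from $\sum_m r^m\mathbb{E}\left(\|g_m(t)\|^2\right)^{1/2}<\infty$ via the first-moment bound and monotone convergence. Your version is more elementary and gives the same conclusion for each fixed $t$; neither argument fully addresses the uniformity in $t$ needed to conclude $T_C(r)>0$ (the paper simply asserts uniform convergence on $[0,T]$ after Borel--Cantelli, while you flag the issue and appeal to density and monotonicity), so you are no worse off than the original on this point.
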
 

\begin{proof}
Let $\rho >0$. We have:
\begin{align*}
 \mathbb{P}\left(\sum_{|I|=m}\left\|P_{I}\int_{\triangle^{m}[0,t]}dB^{I}\right\|\geq \rho^m\right) \leq &\frac{1}{\rho^{2m}}\mathbb{E}\left[\left(\sum_{|I|=m}\left\|P_{I}\int_{\triangle^{m}[0,t]}dB^{I}\right\|\right)^2\right]\\
 \leq&\frac{d^m}{\rho^{2m}}\sum_{|I|=m}\mathbb{E}\left(\left\|P_{I}\int_{\triangle^{m}[0,t]}dB^{I}\right\|^2\right)\\
 \leq&\left(\frac{d}{\rho^2}\right)^{m}\sum_{|I|=m}\|P_I\|^{2}\mathbb{E}\left(\left|\int_{\triangle^{m}[0,t]}dB^{I}\right|^2\right)\\
 \leq &\left(\frac{d}{\rho^2}\right)^{m}\sum_{|I|=m}\|P_I\|^{2}\frac{K^{2m}}{m!}t^{2mH} \\
 \leq&\left(\frac{d^2}{\rho^2}\right)^{m}M^{2m} \frac{K^{2m}}{(m!)^{1-2\gamma}}t^{2mH}
\end{align*}

Therefore: $\displaystyle \sum_{m=1} \mathbb{P}\left(\sum_{|I|=m}\left\|P_{I}\int_{\triangle^{m}[0,t]}dB^{I}\right\|\geq \rho^m\right)<\infty $. 
From Borel Cantelli Lemma, we deduce that for every $ r>1$, $\displaystyle \sum^{\infty}_{m=1}r^{m}\|g_{m}(t)\|$ converges uniformly on any interval $[0,T]$ almost surely. From the results of the previous section, it thus implies that for every $r>1$, $T_C(r)>0$ and that  for every $0 \le t <T_C(r)$, we have  almost surely
\[
X_t=x_0+\sum^{+\infty}_{k=1}g_{k}(t).
\]
Finally, we have
\begin{align*}
\mathbb{E} \left( \left\|X_t-\left(x_0+\sum^{N}_{k=1}g_{k}(t)\right) \right\|^2, t <T_C(r) \right)^{1/2} & \le \sum_{k=N+1}^{+\infty} \sum_{|I|=k} \frac{K^k}{\sqrt{k!}} t^{2Hk} M^k (k!)^\gamma \\
 & \le   \sum_{k=N+1}^{+\infty} \frac{(dKt^{2H}M)^k}{(k!)^{1/2-\gamma }} \\
 & \le C_\gamma  \frac{ (dM Kt^{2H})^{N+1}}{((N+1)!)^{1/2-\gamma}} \Phi_\gamma ( dK M t^{2H}).
\end{align*}
\end{proof}

\newpage

\end{document}